\declaretheorem[parent=section]{theorem}
\declaretheorem[sibling=theorem]{proposition}
\declaretheorem[sibling=theorem]{lemma}
\declaretheorem[sibling=theorem]{corollary}
\declaretheorem[sibling=theorem,style=remark]{remark}
\declaretheorem[sibling=theorem,style=remark]{example}
\declaretheorem[sibling=theorem,style=definition]{definition}
\declaretheorem[numbered=no,name=Theorem]{theorem*}
\declaretheorem[numbered=no,style=definition,name=Definition]{definition*}
\declaretheorem[numbered=no,name=Proposition]{proposition*}
\begin{document}
\bibliographystyle{amsalpha}

\newcommand{\mi}{\ensuremath{\mathrm{i}}}
\newcommand{\me}{\ensuremath{\mathrm{e}}}
\newcommand{\Id}{\on{Id}}
\newcommand{\on}[1]{\operatorname{#1}}
\newcommand{\abs}[1]{\left\vert #1 \right\rvert}
\newcommand{\norm}[1]{\left\lVert #1 \right\rVert}
\newcommand{\Ex}[1]{\operatorname{E}\! \left[ #1 \right]}
\newcommand{\diff}[1]{\operatorname{d}\ifthenelse{\equal{#1}{}}{\,}{#1}}
\newcommand{\Ker}[1]{\operatorname{ker}\left(#1\right)}
\newcommand{\E}{\operatorname{E}}
\newcommand{\C}{\mathbb{C}}
\newcommand{\R}{\mathbb{R}}
\newcommand{\Q}{\mathbb{Q}}
\newcommand{\N}{\mathbb{N}}
\newcommand{\Z}{\mathbb{Z}}
\newcommand{\LL}{\on{L}}

\newcommand{\todo}[1]{\textcolor{red}{\textbf{TODO:}$\left\{\right.$#1$\left.\right\}$}}

\author{S. Bourguin}
\address{Boston University, Department of Mathematics and Statistics, 111
  Cummington Mall, Boston, MA 02215, USA}
\email{bourguin@math.bu.edu}
\author{S. Campese}
\address{University of Luxembourg, Mathematics Research Unit, 6, rue Richard
  Coudenhove-Kalergi, 1359 Luxembourg, Luxembourg}
\email{simon.campese@uni.lu}
\author{N. Leonenko}
\address{Cardiff University, School of Mathematics, Senghennydd Road, Cardiff, Wales, UK, CF24 4AG}
\email{leonenkon@cardiff.ac.uk}
\author{M.S. Taqqu}
\address{Boston University, Department of Mathematics and Statistics, 111
  Cummington Mall, Boston, MA 02215, USA}
\email{murad@bu.edu}
\title{Four moments theorems on Markov chaos}
\thanks{S. Campese was supported by InterMobility grant LILAC of the Luxembourg
  National Research Fund. M.S. Taqqu and N. Leonenko were supported by Cardiff Incoming
  Visiting Fellowship Scheme. N. Leonenko was supported partly by Australian Research Council's Discovery Projects funding scheme (project DP160101366) and by project MTM2015-71839-P of MINECO, Spain (co-funded with FEDER funds)}
\begin{abstract}

  We obtain quantitative Four Moments Theorems establishing convergence of the laws
  of elements of a Markov chaos to a Pearson distribution, where the only assumption we make on the Pearson distribution is
  that it admits four moments. While in general one cannot use moments
  to establish convergence to a heavy-tailed distributions, we provide a context
  in which only the first four moments suffices. These results are
  obtained by proving a general carr\'e du champ bound on the distance
  between laws of random variables in the
  domain of a Markov diffusion generator and invariant measures of diffusions. For elements of a Markov
  chaos, this bound can  be reduced to just the first four moments.
\end{abstract}
\subjclass[2010]{60F05, 60J35, 60J99}
\keywords{Markov operator, Diffusion generator, Gamma calculus, Pearson
  distributions, Stein's method, Malliavin-Stein, Limit theorems}
\maketitle

\section{Introduction}

Four Moments Theorems are results which imply or characterize convergence
in law of some approximating sequence $\left\{ F_k\colon k \geq 0 \right\}$ of random variables
towards some target measure $\nu$. A typical example of such an
approximating sequence (with the target measure $\nu$ being Gaussian) are
homogeneous sums of the form
\begin{equation}
  \label{eq:37}
F_k = \sum_{j_1, \ldots , j_p=1}^k a_{j_1\cdots j_p}W_{j_1}\cdots W_{j_p},
\end{equation}
normalized to have unit variance. Here, $\left\{ W_j\colon j \geq 1 \right\}$ is an i.i.d. sequence of
standard Gaussian random variables and the constants $a_{j_1\cdots j_p}$ are
symmetric in the indices and vanish on diagonals. The classical fourth moment theorem of Nualart and Peccati (see
\cite{nualart_central_2005}) states
that $F_k$ converges in law to a standard Gaussian distribution if and
only if the fourth moment of $F_k$ converges to the fourth moment of
the standard Gaussian distribution, namely 3. In fact, the
aforementioned two authors have proven their result in infinite
dimensions, where the sequence of $F_k$ are sequences of multiple Wiener-It\=o
integrals of fixed order $p$. The original proof in~\cite{nualart_central_2005}
uses stochastic analysis and shortly after its publication another proof via
Malliavin calculus was given by Nualart and Ortiz-Latorre
in~\cite{nualart_central_2008}. Later, in \cite{nourdin_noncentral_2009},
Nourdin and Peccati used this approach to obtain a similar result for
approximation  of the Gamma distribution. They showed, again for a sequence of
normalized Wiener-It\=o integrals, that convergence of the third and fourth  moments is enough to converge to a Gamma distribution.

In \cite{ledoux_chaos_2012}, Ledoux gave new proofs of the above results from
the abstract point of view of Markov diffusion generators. In this context, given a Markov diffusion generator satisfying a certain spectral
condition, convergence of a sequence of eigenfunctions of such a
generator to a Gaussian or Gamma distribution is still controlled by
convergence of just the first four moments. Multiple Wiener-It\=o integrals fit
the framework as they are eigenfunctions of the infinite-dimensional
Ornstein-Uhlenbeck operator.

Building on \cite{ledoux_chaos_2012}, Azmoodeh et al. showed in \cite{azmoodeh_fourth_2014} that
the spectral condition can be replaced with a Markov chaos
property of the eigenfunctions which is less
restrictive than the earlier notion of Markov chaos introduced in \cite{ledoux_chaos_2012}. In addition to Four Moments Theorems for convergence
towards the Gaussian and Gamma distributions, a Four Moments Theorem
for the approximation of the Beta distribution was proven. 

When viewed from the point of view of diffusion theory, it is interesting to
note that the Gaussian, Gamma and Beta distributions share a common
feature: they are the only invariant measures of a diffusion on the real line admitting
an orthonormal basis of polynomial eigenfunctions (see \cite{mazet_classification_1997}). They
also are the only members of the Pearson family of distributions (introduced in
\cite{pearson_contributions_1895}, see for example
\cite{forman_pearson_2008} for a modern treatment) which
have moments of all orders. This naturally leads to the question whether Four
Moments Theorems can also be proven when the target measure $\nu$ is
one of the three remaining heavy-tailed
classes of Pearson distributions, commonly known as skew-t, F- and inverse
Gamma-distributions (see Subsection \ref{pearsondiffsubsec} for details). Here, heavy-tailed is
understood in the sense that only a finite number of moments exist.  

Distributions $\nu$ belonging to the Pearson family (or more generally absolutely
continuous invariant measures of diffusions) have already been
considered in \cite{eden_nourdin-peccati_2015,kusuoka_steins_2012} as
possible limit laws $\nu$ for sequences of multiple
Wiener-It\=o integrals. These integrals have the infinite dimensional
Ornstein-Uhlenbeck as underlying Markov generator. For such
multiple Wiener-It\=o integrals, however, as was also observed in
\cite{kusuoka_steins_2012}, the only possible limit distributions belonging to the
Pearson family are the Gaussian and Gamma laws.

In this paper, we present a systematic approach to the problem and prove
quantitative
Four Moments Theorems for all six classes of the Pearson distribution. The only
assumption we make, which seems to be unavoidable in this context, is that the parameters of the
distribution are chosen in such a way that the first four moments exist. Compared to
\cite{azmoodeh_fourth_2014}, we are not only able to cover the full Pearson class as a target
distribution $\nu$, but also extend the admissible chaos structures, so
that, for example, the laws of the converging sequence of chaotic random variables can be
heavy-tailed as well. In particular, no assumption of hypercontractivity or
diagonalizability of the underlying generator is made. Our main result
(Theorem \ref{thm:1} along with Proposition \ref{prop:1}, to which we refer 
for full details and any unexplained notation) is a quantitative Four Moments Theorem of
the form
\begin{equation}
  \label{eq:40}
  d(G_k,Z) \leq c \sqrt{ \int_E^{} P(G_k) \diff{\mu} + \xi_k \int_E^{} Q(G_k) \diff{\mu}},
\end{equation}
where $d$ is a suitable probabilistic distance metrizing convergence in law, $c$
is a positive constant, $Z$
is a random variable whose law $\nu$ belongs to the Pearson family,
$G_k$ is a chaotic random variable with some underlying  Markov diffusion
generator $L_k$ for each $k \geq 0$, and
$P$ and $Q$ are polynomials of degree four, whose coefficients are explicitly
given in terms of the parameters of the law of $Z$. In comparison to earlier
Four Moments Theorems, the linear combination of moments, given by the
integral involving the polynomial $Q$ appearing in the bound on
the right hand side of~\eqref{eq:40} is
new (and only appears in certain cases). The deterministic non-negative
real sequence $\left\{ \xi_k\colon k\geq 0 \right\}$ is defined in terms of a new notion of \emph{chaos grade},
which, heuristically speaking, measures how similar the chaotic sequence
$\left\{ G_k\colon k\geq 0 \right\}$ is to the target random variable $Z$, when the latter is viewed as
an element of the Markov chaos of a Pearson generator. 
To prove~\eqref{eq:40}, we first obtain a generic bound of the form
\begin{equation}
  \label{eq:41}
  d(G,Z) \leq c \int_E^{} \abs{\Gamma(G,-L^{-1}G) - \tau(G)} \diff{\mu}
\end{equation}
(see Theorem \ref{thm:4}) for probabilistic
distances $d(G,Z)$ between a target random variable $Z$ whose law can belong to
a large class of absolutely continuous distributions, and a random
variable $G$ coming from a Markov structure which involves the \emph{carré du
  champ} operator $\Gamma$, the pseudo inverse $L^{-1}$ of the underlying Markov
generator and a function $\tau$ related to the target measure. Again, we stress
that both, the laws of $Z$ and $G$ do not need to have moments of all orders.
The bound \eqref{eq:41} is of independent interest and obtained using a combination
of Stein's method and the so-called Gamma calculus. It can be seen as an
abstract version of the \emph{Malliavin-Stein method} on Wiener chaos, first
introduced in \cite{nourdin_steins_2009}. Then, in order to further  bound~\eqref{eq:41} by the right hand side of~\eqref{eq:40} when $G$ is
a chaotic element of the Markov structure and the law of $Z$ belongs to the Pearson
family, we again make use of the Gamma calculus and spectral
arguments that, in a similar spirit as in \cite{azmoodeh_fourth_2014},
allow us to obtain a linear combinations of the first four moments as
a bound for the right hand side of \eqref{eq:41}.

Note that, in general, one cannot a priori use moments to prove
convergence towards a heavy-tailed distribution. However, our results
provide a context in which this is not only possible, but where convergence
of only the first four moments  suffices.

As particular examples of structures fitting our framework, we study
(tensorized) Pearson generators, which have multivariate Pearson distributions
as invariant measures. In this context, the chaos grade provides a
heuristic for the question which Pearson laws are compatible with each
other, in the sense that
chaotic random variables (for example homogeneous sums of the type~\eqref{eq:37}
with the Gaussian laws replaced by arbitrary other Pearson laws with
finite first four moments) with respect to one Pearson generator
can converge in distribution to the invariant measure of another (possibly
different) Pearson generator.

The paper is organized as follows. In Section \ref{s-11}, we introduce
the Markov framework we will be working in and give a quick summary of Stein's
method as well as an overview of the Pearson distributions. Our main results, in
particular the bounds~\eqref{eq:41} and~\eqref{eq:40} as well as the definition
of Markov chaos are presented in Section \ref{mainresultssec}. As an example, we
study in Section \ref{secchaosstruct} the case of Pearson chaos, whose chaos structure fits our framework.

As a last remark, let us mention that speaking of Four Moments Theorems as
opposed to a Fourth Moment or generally a Third and Fourth Moment is
merely a
question of style, depending on whether one normalizes the approximating
sequences to have the correct mean and variance or not. We chose not to impose any
normalization.

\section{Preliminaries}
\label{s-11}

\subsection{Markov diffusion generators}
\label{s-10}

Our main results will be proven in the setting of Markov diffusion generators,
that is, we have some underlying diffusive Markov process $\left\{
  X_t\colon t\geq 0 \right\}$ with
invariant measure $\mu$, associated semigroup $\left\{ P_t\colon t\geq
0\right\}$, infinitesimal
generator $L$ and carré du champ $\Gamma$, where all of these objects
are inherently connected. The operators $L$ and $\Gamma$ play an
important role here. From an abstract
point of view, a standard and elegant way to introduce this setting is through so
called Markov triples, where one starts from the invariant measure $\mu$, the
carré du champ $\Gamma$ and a suitable algebra of functions (random
variables), from which the generator $L$, the semigroup $\left\{ P_t\colon t\geq
0\right\}$ (including
their $L^2$-domains) and thus also
the Markov process $\left\{ X_t\colon t\geq
0\right\}$ are constructed.
The assumptions we will make here are those of a so-called \emph{Full Markov Triple} $(E,\mu,\Gamma)$ in the sense of \cite[Part I,
Chapter 3]{bakry_analysis_2014}. Before introducing this setting rigorously, let us give an informal description.
Random variables are 
viewed as elements of an algebra $\mathcal{A}$ of functions $F \colon E \to \R$,
where $(E,\mathcal{F},\mu)$ is some probability space. On this algebra, the
generator $L$ and the bilinear and symmetric carré du champ operator $\Gamma$ are defined and related via the identity
\begin{equation*}
  \Gamma(F,G) = \frac{1}{2} \left( L (FG) - FLG - GLF \right).
\end{equation*}
They satisfy a diffusion property, which in its simplest form reads
\begin{equation*}
  L \varphi(F) = \varphi'(F) LF + \varphi''(F) \Gamma(F,F)
\end{equation*}
or, expressed using the carré du champ,
\begin{equation*}
  \Gamma(\varphi(F),G) = \varphi'(F) \Gamma(F,G).
\end{equation*}
The subset of random variables with finite mean and variance is then $L^2(E,\mu)
\subseteq \mathcal{A}$. On this smaller space, $L$ and $\Gamma$ are typically only densely
defined on their domains $\mathcal{D}(L)$ and $\mathcal{D}(\mathcal{E}) \times
\mathcal{D}(\mathcal{E})$. The symbol $\mathcal{E}$, defined below, stands for a Dirichlet form
(the so-called energy functional), which is used to construct the
domains. On these domains, an important relation between $L$ and
$\Gamma$ holds, namely the integration by parts formula
\begin{equation*}
  \int_E^{} \Gamma(F,G) \diff{\mu} = - \int_E^{} FLG.
\end{equation*}

We are now going to introduce this setting in a
rigorous way, following closely~\cite[Part I, Chapter
3]{bakry_analysis_2014}. The needed definitions and assumptions are as follows. 
\begin{enumerate}[(i)]
\item $(E,\mathcal{F},\mu)$ is a probability space and $L^2(E,\mathcal{F},\mu)$ is separable.
\item $\mathcal{A}$ is a vector space of real-valued, measurable functions
  (random variables) on $(E,\mathcal{F},\mu)$, stable under products
  (i.e. $\mathcal{A}$ is an algebra) and under the action of
  $\mathcal{C}^{\infty}$-functions $\Psi \colon \R^k \to \R$.
\item $\mathcal{A}_0 \subseteq \mathcal{A}$ is a subalgebra of $\mathcal{A}$ consisting
  of bounded functions which are dense in $L^p(E,\mu)$ for every $p \in [1,\infty)$. We assume
  that $\mathcal{A}_0$ is also stable under the action of smooth functions $\Psi$
  as above and also that $\mathcal{A}_0$ is an ideal in $A$ (if $F \in
  \mathcal{A}_0$ and $G \in \mathcal{A}$, then $FG \in \mathcal{A}_0$) 
\item The carré du champ operator $\Gamma \colon \mathcal{A}_0 \times \mathcal{A}_0 \to
  \mathcal{A}_0$ is a bilinear symmetric map such that $\Gamma(F,F) \geq 0$ for all $F \in
  \mathcal{A}_0$. For every $F \in \mathcal{A}_0$ there exists a finite constant $c_F$ such
  that for every $G \in \mathcal{A}_{0}$
  \begin{equation*}
    \abs{\int_E^{} \Gamma(F,G) \diff{\mu}} \leq c_F \norm{G}_2,
  \end{equation*}
  where $\norm{G}_2^2 = \int_E^{}G^2 \diff{\mu}$.
  The Dirichlet form $\mathcal{E}$ is defined on $\mathcal{A}_0 \times \mathcal{A}_0$ by
  \begin{equation*}
    \mathcal{E}(F,G) = \int_E^{} \Gamma(F,G) \diff{\mu}.
  \end{equation*}
  \item
  The domain $\mathcal{D}(\mathcal{E}) \subseteq L^2(E,\mu)$ is obtained by completing
  $\mathcal{A}_0$ with respect to the norm $\norm{F}_{\mathcal{E}} = \left(
    \norm{F}_2 + \mathcal{E}(f,f) \right)^{1/2}$. The Dirichlet form 
  $\mathcal{E}$ and the carré du champ operator $\Gamma$ are extended to
  $\mathcal{D}(\mathcal{E}) \times \mathcal{D}(\mathcal{E})$ by continuity and
  polarization. We thus have that $\Gamma \colon \mathcal{D}(\mathcal{E}) \times
  \mathcal{D}(\mathcal{E}) \to L^1(E,\mu)$.
\item $L$ is a linear operator, defined on $\mathcal{A}_0$
  via the \emph{integration by parts formula}
  \begin{equation}
    \label{eq:21}
    \int_E^{} G L F \diff{\mu} =  - \int_E^{} \Gamma(F,G) \diff{\mu}
  \end{equation}
  for all $F,G \in \mathcal{A}_0$. We assume that $L(\mathcal{A}_0) \subseteq
  \mathcal{A}_0$. 
\item The domain $\mathcal{D}(L) \subseteq \mathcal{D}(\mathcal{E})$ 
  consists of all $F \in \mathcal{D}(\mathcal{E})$ such that
  \begin{equation*}
    \abs{\mathcal{E}(F,G)} \leq c_F \norm{G}_2
  \end{equation*}
  for all $G \in \mathcal{D}(\mathcal{E})$, where $c_F$ is a finite constant. The
  operator $L$ is extended from $\mathcal{A}_0$ to $\mathcal{D}(L)$ by the
  integration by parts formula~\eqref{eq:21}. On $\mathcal{D}(L)$, $L$ is by
  construction self-adjoint (as $\Gamma$ is symmetric). We assume that $L 1 = 0$ and
  that $L$ is ergodic: $LF =0$ implies that $F$ is constant for all $F \in \mathcal{D}(L)$.
\item The operator $L \colon \mathcal{A} \to \mathcal{A}$ is an extension of $L
  \colon \mathcal{A}_0 \to \mathcal{A}_0$. On $\mathcal{A} \times \mathcal{A}$, the
  carré du champ $\Gamma$ is defined by
  \begin{equation}
    \label{eq:42}
    \Gamma(F,G) = \frac{1}{2} \left( L(FG) - FLG - GLF \right).
  \end{equation}
\item For all $F \in \mathcal{A}$, we assume $\Gamma(F,F) \geq 0$ with equality if, and only
  if, $F$ is constant.
\item The \emph{diffusion property} holds. For all $\mathcal{C}^{\infty}$-functions
  $\Psi \colon \R^p \to \R$ and $F_1,\dots,F_p,G \in \mathcal{A}$ one has
  \begin{equation}
    \label{eq:24}
    \Gamma \left( \Psi \left( F_1,\dots,F_p \right),G \right)
    =
    \sum_{j=1}^p \partial_j \Psi(F_1,\dots,F_p) \, \Gamma(F_j,G)
  \end{equation}
  and
  \begin{equation}
    \label{eq:30}
    L \Psi(F_1,\dots,F_p) = \sum_{i=1}^p \partial_i \Psi(F_1,\dots,F_p) L F_{i}
    +
    \sum_{i,j=1}^p \partial_{ij} \Psi(F_1,\dots,F_p) \, \Gamma(F_i,F_j).
  \end{equation}
\item The integration by parts formula~\eqref{eq:21} continues to hold if $F \in
  \mathcal{A}$ and $G \in \mathcal{A}_0$ (or vice versa).
\end{enumerate}
Of course, one can also introduce a symmetric Markov semigroup and associated
Markov process with infinitesimal generator $L$ defined on its domain
$\mathcal{D}(L)$ but as we will not make direct use of both of these objects in
this paper, we again refer to~\cite[Part I, Chapter
3]{bakry_analysis_2014} for deails.

To summarize, we have an algebra $\mathcal{A}$ of random variables on some
probability space $(E,\mathcal{F},\mu)$ on which the carré du champ operator $\Gamma$
and the generator $L$ act. The measure $\mu$ is called the invariant measure of
$L$. Note that there is no integrability assumption on the
elements of $\mathcal{A}$. The $L^2 \left( E,\mu \right)$-domains of $L$ and $\Gamma$ are denoted by
$\mathcal{D}(L)$ and $\mathcal{D}(\mathcal{E}) \times \mathcal{D}(\mathcal{E})$,
respectively, and both, $\mathcal{D}(L)$ and $\mathcal{D}(\mathcal{E})$
are dense in $L^2(E,\mu)$. By construction, one has $\mathcal{A}_0 \subseteq \mathcal{D}(L) \subseteq
\mathcal{D}(\mathcal{E}) \subseteq L^2(E,\mu) \subseteq \mathcal{A}$. 

A model example of the setting described above is the Markov triple 
$(\R^d,\gamma_d,\Gamma)$, where $\gamma_d$ is the $d$-dimensional Gaussian measure and $\Gamma =
\left\langle \nabla f,\nabla f \right\rangle_{\R^d}$ the carré du champ of the $d$-dimensional
Ornstein-Uhlenbeck generator $L$ given by $Lf= x \cdot \nabla f + \Delta f$. A suitable
algebra $\mathcal{A}$ is given by polynomials in $d$ variables. In infinite
dimension, one obtains the infinite-dimensional Ornstein-Uhlenbeck generator on
Wiener space with Wiener measure as invariant distribution. In this case, we  have that $L=-\delta D$, where
$\delta$ is the Malliavin divergence operator (also called Skorohod integral) and
$D$ the Malliavin derivation operator with carré du champ operator $\Gamma$
given by $\Gamma(F,G)=\left\langle DF,DG \right\rangle_{\mathfrak{H}}$, where $\mathfrak{H}$
denotes the underlying Hilbert space. For further details on this example, see
\cite{bouleau_dirichlet_1991,nualart_malliavin_2006}, or
\cite{nourdin_normal_2012}.

The Ornstein-Uhlenbeck generator is a particular example of Pearson generators
which will be discussed in detail in Section~\ref{secchaosstruct}. General
references with more examples fitting our framework are \cite{bakry_diffusions_1985,bakry_symmetric_2014,bakry_analysis_2014,bakry_analysis_2014,fukushima_dirichlet_2011}.

\subsection{Stein's method for invariant measures of diffusions}
\label{steinfordiff}
In this section, we present Stein's method for invariant measures of
diffusions. Note that if $\mu$ is a measure which is absolutely continuous with
respect to the Lebesgue measure and admits a density $p$ as well as a second
moment, then under very minimal assumptions there exists a Markov diffusion
generator $L$ having $\mu$ as its invariant measure. To be more precise, let
$\mu$ be a probability measure admitting a density $p$ with support $(l,u) \subseteq \R$, $-\infty \leq
l < u \leq +\infty$, and denote $m = \int_{\R}^{} x
p(x)dx$ and
\begin{equation}
  \label{eq:32}
  \sigma^2(x) =
  \frac{-2\theta \int_{-\infty}^x (y-m) p(y) \diff{y}}{p(x)},\ x \in (l,u).
\end{equation}
Then, the stochastic differential equation
\begin{equation}
  \label{eq:28}
  \diff{X_t}
  =
  -\theta(X_t-m) \diff{t}
  +
  \sigma(X_t) \diff{B_t},\ X_t \in (l,u),
\end{equation}
where $\left\{ B_t \colon t\geq 0 \right\}$ is a Brownian motion, has a unique weak Markovian solution with
invariant measure $\mu$ (see \cite{bibby_diffusion-type_2005}, Theorem
2.3). The support of the density $p$ could very well be taken to
  be a union of open intervals, but we treat
  here the case of one open interval in order not to make the notation
  heavier than it needs to be.

Stein's density approach (see \cite{stein_approximate_1986} for a
detailed treatment) allows us to
characterize the invariant measure $\mu$ of the diffusion
\eqref{eq:28} through the following theorem, called Stein's lemma for
invariant measures of diffusions (see
 \cite[Proposition 6.4]{nourdin_steins_2009} or
 \cite[Lemma 6]{eden_nourdin-peccati_2015}).

\begin{theorem}
  \label{thm:3}
  Let
$\mu$ be a probability measure admitting a density $p$ with support $(l,u) \subseteq \R$, $-\infty \leq
l < u \leq +\infty$, such that $\int_{\R}^{}\abs{x}p(x)dx < \infty$ and
$\int_{\R}^{}xp(x)dx =m$. Define the function
\begin{equation*}
\tau(x)=\frac{1}{2}\sigma^2(x)\mathds{1}_{(l,u)}(x),\quad x \in \R,
\end{equation*}
where $\sigma^2$ is defined in terms
of $p$ by~\eqref{eq:32} and let $Z$ be a random variable having
distribution $\mu$.
\begin{enumerate}[(i)]
\item For every differentiable $\varphi$ such that
  $\tau(Z)\varphi'(Z) \in L^1(\Omega)$, one has that
  $(Z-m)\varphi(Z) \in L^1(\Omega)$ and 
\begin{equation*}
\E \left(\tau(Z)\varphi'(Z) - \theta(Z-m)\varphi(Z) \right) = 0.
\end{equation*}
\item Let $X$ be a real-valued random variable with an absolutely
  continuous distribution. If for every differentiable $\varphi$
  such that $\tau(X)\varphi'(X) \in L^1(\Omega)$ and $(X-m)\varphi(X) \in L^1(\Omega)$, one has
  that 
  \begin{equation}
    \label{eq:33}
\E \left(\tau(X)\varphi'(X) - \theta(X-m)\varphi(X) \right)=0,
\end{equation}
then $X$ has distribution $\mu$.
\end{enumerate}
\end{theorem}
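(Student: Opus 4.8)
The plan is to prove Theorem~\ref{thm:3} by reducing both assertions to a single density computation, following the classical Stein density approach. First I would record the key identity: if $\mu$ has density $p$ supported on $(l,u)$ with mean $m$, then from~\eqref{eq:32} one has $\tau(x) p(x) = -\theta \int_{-\infty}^x (y-m) p(y)\diff{y}$ on $(l,u)$, so that $\tau p$ is absolutely continuous there with $(\tau p)'(x) = -\theta (x-m) p(x)$ for (Lebesgue-almost) every $x \in (l,u)$. Moreover $\tau p$ vanishes at both endpoints: at $l$ because the integral is empty, and at $u$ because $\int_l^u (y-m) p(y)\diff{y} = m - m = 0$ (using $\int_{\R}|x|p(x)\diff{x} < \infty$). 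This boundary vanishing is the crux of the whole argument.

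For part~(i), I would fix a differentiable $\varphi$ with $\tau(Z)\varphi'(Z) \in L^1(\Omega)$, i.e.\ $\int_l^u |\tau(x)\varphi'(x)| p(x)\diff{x} < \infty$, and integrate by parts on $(l,u)$:
\begin{equation*}
  \int_l^u \tau(x)\varphi'(x) p(x)\diff{x}
  = \bigl[ \varphi(x)\, \tau(x) p(x) \bigr]_l^u - \int_l^u \varphi(x)\, (\tau p)'(x)\diff{x}
  = \theta \int_l^u (x-m)\varphi(x) p(x)\diff{x},
\end{equation*}
where the boundary term vanishes by the endpoint analysis above. This simultaneously shows $(Z-m)\varphi(Z) \in L^1(\Omega)$ (the right-hand integral is finite because the left-hand one is) and gives the claimed identity $\E[\tau(Z)\varphi'(Z) - \theta(Z-m)\varphi(Z)] = 0$. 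Care is needed to justify the integration by parts when $\varphi'$ is merely locally integrable against $p$ and the endpoints may be infinite: I would work on a compact exhaustion $[l_n,u_n] \uparrow (l,u)$ and pass to the limit, using the integrability hypothesis to control the interior integrals and the explicit form of $\tau p$ to kill the boundary contributions.

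For part~(ii), let $X$ have absolutely continuous law with density $q$ on some set, and suppose~\eqref{eq:33} holds for all admissible $\varphi$. The standard trick is to solve the Stein equation: given a nice test function $h$ (say bounded continuous), set
\begin{equation*}
  \varphi_h(x) = \frac{1}{\tau(x) p(x)} \int_l^x \bigl( h(y) - \E[h(Z)] \bigr) p(y)\diff{y}, \qquad x \in (l,u),
\end{equation*}
which by construction satisfies $\tau(x)\varphi_h'(x) - \theta(x-m)\varphi_h(x) = h(x) - \E[h(Z)]$ pointwise on $(l,u)$ (using $(\tau p)' = -\theta(x-m)p$). Then I would verify that $\varphi_h$ meets the integrability requirements needed to plug it into~\eqref{eq:33} — boundedness of $\varphi_h$ and of $\tau \varphi_h'$ follows from boundedness of $h$ together with the decay of $\tau p$ at the endpoints — and conclude $\E[h(X)] = \E[h(Z)]$ for a measure-determining class of $h$, hence $X \sim \mu$. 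The main obstacle, and the place requiring genuine care rather than routine bookkeeping, is exactly this regularity/integrability check for $\varphi_h$ and the justification of the integration by parts in full generality: one must ensure the boundary terms vanish and $\varphi_h$ lies in the admissible test-function class even when $(l,u)$ is unbounded or $p$ degenerates at the endpoints. This is precisely where the references \cite[Proposition 6.4]{nourdin_steins_2009} and \cite[Lemma 6]{eden_nourdin-peccati_2015} do the delicate work, and I would either cite them directly or reproduce their truncation argument.
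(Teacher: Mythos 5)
Your proposal is correct, but note that the paper itself gives no proof of Theorem~\ref{thm:3}: it states it as a known result, citing \cite[Proposition 6.4]{nourdin_steins_2009} and \cite[Lemma 6]{eden_nourdin-peccati_2015}, and your sketch is exactly the standard density-approach argument carried out in those references (the identity $(\tau p)'=-\theta(x-m)p$ with vanishing of $\tau p$ at the endpoints, integration by parts for part~(i), and the explicit Stein solution $f_h$ with the boundedness estimates for part~(ii)). The only points you should not leave implicit if you write it out in full are that the boundary term requires $\varphi\,\tau p\to 0$ (not merely $\tau p\to 0$, which follows from the monotonicity of $\tau p$ on $(m,u)$ and $(l,m)$ together with the integrability of $\tau(Z)\varphi'(Z)$), and that for part~(ii) the solution must also be defined off $(l,u)$ by $f_h(x)=-\bigl(h(x)-\E(h(Z))\bigr)/\bigl(\theta(x-m)\bigr)$ since $X$ need not be supported in $(l,u)$ --- both of which are handled in the cited references you propose to invoke.
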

Based on the above Stein lemma, one can use the now well established
Stein methodology to quantitatively measure the distance between the
law of a random variable $X$ and the law of a random variable $Z$
corresponding to an invariant measure of a
diffusion. The generalization of the original Stein method to
invariant measures of diffusions has been recently studied in \cite{kusuoka_steins_2012}
and further developed in \cite{eden_nourdin-peccati_2015}. In order to present
this method, we need to introduce separating classes of functions and
probabilistic distances.
\begin{definition}
  \label{def:2}
Let $\mathscr{H}$ be a collection of Borel-measurable functions on
$\R$. We say that the class $\mathscr{H}$ is \emph{separating} if the
following property holds: any two real-valued random variables $X, Y$
verifying $h(X), h(Y) \in L^1(\Omega)$ and $\E(h(X)) = \E(h(Y))$ for
every $h \in \mathscr{H}$, are necessarily such that $X$ and $Y$ have
the same distribution.
\end{definition}
Separating classes of functions can be used to introduce distances between
probability measures in the following way.
\begin{definition}
  Let $\mathscr{H}$ be a separating class in the sense of Definition~\ref{def:2}
  and let $X,Y$ be real-valued random variables such that $h(X), h(Y) \in L^1(\Omega)$
  for every $h \in \mathscr{H}$. Then the \emph{distance} $d_{\mathscr{H}}(X,Y)$
  between the distributions $X$ and $Y$ is given by
\begin{equation}
  \label{distancegeneralform}
d_{\mathscr{H}}(X,Y) = \sup_{h \in \mathscr{H}}\abs{\E(h(X)) - \E (h(Y))}.
\end{equation}
\end{definition}
One can show that $d_{\mathscr{H}}$ is a metric on some subset of the class of
all probability measures on $\R$ (see \cite[Chapter 11]{dudley_real_2002}). With some abuse of
language, one often speaks of the ``distance between random variables'' when really the distance between the laws of these random variables
is meant. We will call a given distance $d_{\mathscr{H}}$ \emph{admissible} for
a set $\mathcal{M}$ of random variables if $d_{\mathscr{H}}(X,Y)$ is well
defined for all $X,Y \in \mathcal{M}$, i.e. if it holds that $\Ex{h(X)}<\infty$ for 
all $X \in \mathcal{M}$ and $h \in \mathscr{H}$.
As an example of a distance as introduced above, one can take
$\mathscr{H}$ to be the class of Lipschitz continuous and bounded
functions. This yields the well-known Fortet-Mourier (or bounded Wasserstein)
distance denoted by $d_{FM}$, which metrizes convergence in distribution and is
defined for \emph{all} real-valued random variables. It is therefore admissible
for any set $\mathcal{M}$ of random variables. Other
distances (with smaller domains) are the total variation, Kolmogorov or
Wasserstein distance (see REF book Giovanni/Ivan Appendix).
A Stein equation is an ordinary differential equation linking the
notion of distance (through the right-hand side of \eqref{distancegeneralform}) to the characterizing expression of a distribution
appearing in Stein's lemma (the right-hand side of \eqref{eq:33} for
instance). More precisely, a Stein equation associated to the Stein
characterization \eqref{eq:33} is given by 
\begin{equation}
  \label{steineqinvmeasureofdiff}
\tau(x)f'(x) - \theta(x-m)f(x) = h(x) - \E(h(Z)),
\end{equation}
where $Z$ is a random variable with distribution $\mu$ given by the
invariant measure of the diffusion in \eqref{eq:28}. It is
straightforward to check that this equation has
a continuous solution on $\R$ for each $h \in \mathscr{H}$, denoted by $f_h$, and given
by 
\begin{align*}
  f_h(x) &=  \frac{1}{\tau(x)p(x)}\int_l^x(h(y) - \E(h(Z)))p(y)dy \\
  &= -\frac{1}{\tau(x)p(x)}\int_x^u(h(y) - \E(h(Z)))p(y)dy
\end{align*}
for $x \in (l,u)$, and by
\begin{equation*}
f_h(x) = -\frac{h(x)-\E(h(Z))}{\theta(x-m)}
\end{equation*}
when $x \notin (l,u)$ (as in that case, $\tau(x) = 0$). Now, let $X$ be a real-valued random variable with an
absolutely continuous distribution. By letting $x =
X$ in \eqref{steineqinvmeasureofdiff}, taking expectations and the supremum over
the separating class of test functions $\mathscr{H}$ on both sides, we
can express the distance $d_{\mathscr{H}}$ in
\eqref{distancegeneralform} as 
\begin{equation}
  \label{distanceinsteinsmethodexpression}
d_{\mathscr{H}}(X,Z) = \sup_{h \in \mathscr{H}} \abs{\E
  \left(\tau(X)f_h'(X)\right) -\E \left( \theta(X-m)f_h(X)\right)}.
\end{equation}

The following result, a proof of which can be found in \cite{eden_nourdin-peccati_2015}, combines results from
\cite{kusuoka_steins_2012} and \cite{eden_nourdin-peccati_2015} and provides sufficient conditions under which useful
estimates for $f_h'$ can be obtained. 
\begin{lemma}
  \label{lemmaedenviquez}
Let the function $\sigma^2$, associated to a density $p$ with support $(l,u) \subseteq \R$, $-\infty \leq
l < u \leq +\infty$, be given by \eqref{eq:32}. If $u = \infty$, then
assume that $\varliminf_{x \to u}\sigma^2(x) > 0$, and if $l =
-\infty$, assume that $\varliminf_{x \to l}\sigma^2(x) >
0$. Furthermore, suppose that there exists a positive function $g \in
\mathcal{C}^1((l,u),\R_{+})$ such that
\begin{enumerate}[(i)]
\item $0< \varliminf_{x \to u}\sigma^2(x)/g(x) \leq \varlimsup_{x \to
    u}\sigma^2(x)/g(x) < \infty$;
\item $\lim_{x \to u}g'(x) \in \left[ -\infty,+\infty \right]$;
  \item $0< \varliminf_{x \to l}\sigma^2(x)/g(x) \leq \varlimsup_{x \to
    l}\sigma^2(x)/g(x) < \infty$;
  \item $\lim_{x \to l}g'(x) \in \left[ -\infty,+\infty \right]$.
\end{enumerate}
Then the solution $f_h$ to the Stein equation
\eqref{steineqinvmeasureofdiff}, for a given test function $h \in
\mathscr{H}$ such that $\norm{h'}_{\infty} < \infty$, satisfies 
\begin{equation*}
\norm{f_h'}_{\infty} \leq k \norm{h'}_{\infty},
\end{equation*}
where the constant $k$ does not depend on $h$.
\end{lemma}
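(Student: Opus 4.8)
The plan is to localise the estimate near the two endpoints $l$ and $u$. On any compact subinterval $[a,b]\subset(l,u)$, writing $F(x):=h(x)-\E(h(Z))=\E(h(x)-h(Z))$ so that $\abs{F(x)}\le\norm{h'}_\infty\,\E\abs{x-Z}$ (finite since $\int_\R\abs{y}p(y)\,dy<\infty$), the explicit formula for $f_h$ together with the positivity and continuity of $\tau$ and $p$ on $(l,u)$ gives at once $\abs{f_h'(x)}\le c_{[a,b]}\norm{h'}_\infty$ for $x\in[a,b]$, for instance via the identity $f_h'(x)=F(x)/\tau(x)+\theta(x-m)f_h(x)/\tau(x)$, which is just the Stein equation \eqref{steineqinvmeasureofdiff} solved for $f_h'$. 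It therefore remains to bound $f_h'$ on a one-sided neighbourhood of $u$; the analysis at $l$ is symmetric. I will use throughout the identity $(\tau p)'(x)=-\theta(x-m)p(x)$ obtained by differentiating \eqref{eq:32}, as well as $\tau(x)p(x)=\theta\int_x^u(y-m)p(y)\,dy$, which follows from \eqref{eq:32} since $\int_l^u(y-m)p(y)\,dy=0$ and which tends to $0$ as $x\to u$.

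First I would rewrite $f_h'$ near $u$. Inserting the representation $f_h(x)=-\frac{1}{\tau(x)p(x)}\int_x^u F(y)p(y)\,dy$ into the formula $f_h'(x)=F(x)/\tau(x)+\theta(x-m)f_h(x)/\tau(x)$, and, on a left neighbourhood $(u_\ast,u)$ of $u$ where $y-m$ is bounded away from $0$ (recall $l<m<u$), substituting $p(y)=-(\tau p)'(y)/(\theta(y-m))$ and integrating by parts, one finds that the boundary contribution at $u$ vanishes (since $\tau p\to0$ while $F(\cdot)/(\cdot-m)$ stays bounded) and the boundary contribution at $x$ cancels exactly the term $F(x)/\tau(x)$. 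What survives is the clean identity
\[
f_h'(x)=-\frac{x-m}{\tau(x)^2\,p(x)}\int_x^u\left(\frac{F(y)}{y-m}\right)'\tau(y)p(y)\,dy,\qquad x\in(u_\ast,u).
\]
Expanding the derivative as $\left(\tfrac{F(y)}{y-m}\right)'=\tfrac{h'(y)}{y-m}-\tfrac{F(y)}{(y-m)^2}$ and using $\abs{h'}\le\norm{h'}_\infty$ almost everywhere together with $\abs{F(y)}\le\norm{h'}_\infty\E\abs{y-Z}$, the integrand is bounded by $\norm{h'}_\infty$ times a function of $y$ depending only on $\mu$. The whole problem thus reduces to proving the $h$-free statement
\[
\sup_{x\in(u_\ast,u)}\ \frac{\abs{x-m}}{\tau(x)^2\,p(x)}\int_x^u\left(\frac{1}{\abs{y-m}}+\frac{\E\abs{y-Z}}{(y-m)^2}\right)\tau(y)p(y)\,dy\ <\ \infty .
\]

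This is where the hypotheses enter. If $u=\infty$, the assumption $\varliminf_{x\to u}\sigma^2(x)>0$ together with (i) forces $g$, hence $\tau$, to be bounded away from $0$ near $u$, so $\tau(x)^2p(x)\asymp p(x)$ there; the tail integral is then controlled, up to $\mu$-dependent constants, by $\int_x^\infty(1+y)(y-m)p(y)\,dy$, and a Mills-ratio-type comparison — made rigorous by replacing $\sigma^2$ with $g$ and exploiting that $\lim_{x\to u}g'(x)$ exists by (ii) — bounds the supremum. If $u<\infty$, then $\tau(x)p(x)\to0$ and the tail integral also tends to $0$, so the ratio is a genuine $0/0$ indeterminate form; one resolves it by a de l'Hôpital argument, and it is precisely to differentiate numerator and denominator legitimately — their derivatives would involve $(\sigma^2)'$, which need not exist, let alone converge — that one passes to $g$: by (i) one may write $\sigma^2=g\rho$ with $\rho$ bounded above and below near $u$, and by (ii) one has $\lim_{x\to u}g'(x)\in[-\infty,+\infty]$, which is exactly what is needed to evaluate the resulting limits and obtain $\abs{f_h'(x)}\le k_u\norm{h'}_\infty$ on $(u_\ast,u)$ with $k_u$ depending only on $p$, $\sigma^2$, $g$. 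Running the mirror-image argument near $l$ — now using $f_h(x)=\frac{1}{\tau(x)p(x)}\int_l^x Fp$, the identity $\tau p=-\theta\int_l^x(y-m)p\,dy$, and conditions (iii)–(iv) — produces a constant $k_l$, and combining the two endpoint neighbourhoods with a compact middle interval yields $\norm{f_h'}_\infty\le k\norm{h'}_\infty$ with $k$ independent of $h$.

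The main obstacle is this endpoint analysis, and in particular the finite-endpoint case: there $\tau p$ degenerates, $f_h'$ surfaces as a $0/0$ limit, and the naive de l'Hôpital is blocked because $\sigma^2$ — defined through an integral of the density — need not be differentiable near $u$ with a limiting derivative; the hypotheses on $g$ are engineered precisely to circumvent this. The remaining work is the careful bookkeeping needed to ensure that the constant produced does not depend on $h$ (which rests on $\abs{F(y)}\le\norm{h'}_\infty\E\abs{y-Z}$ together with $\E\abs{Z}<\infty$), and the verification that every boundary term generated by the integration by parts genuinely vanishes, which again uses $\E\abs{Z}<\infty$ at an infinite endpoint.
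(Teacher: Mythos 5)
A preliminary remark: the paper does not prove this lemma itself --- it is imported verbatim, with the proof deferred to \cite{eden_nourdin-peccati_2015} and \cite{kusuoka_steins_2012} --- so there is no in-paper argument to compare against. Your skeleton does follow the same strategy as those references: bound $f_h'$ on compact subintervals using the explicit solution formula, then treat each endpoint separately with a l'H\^opital-type argument driven by the auxiliary function $g$. Moreover, your reduction is correct as far as it goes: the identities $\tau(x)p(x)=\theta\int_x^u(y-m)p(y)\,dy$ and $(\tau p)'=-\theta(\cdot-m)p$, the vanishing of both boundary terms in the integration by parts, and the resulting formula
\begin{equation*}
f_h'(x)=-\frac{x-m}{\tau(x)^2\,p(x)}\int_x^u\Bigl(\frac{F(y)}{y-m}\Bigr)'\tau(y)p(y)\,dy
\end{equation*}
all check out and cleanly isolate an $h$-free quantity whose finiteness would prove the lemma.

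The genuine gap is that this finiteness --- the entire analytic content of the statement, and the only place where hypotheses (i)--(iv) enter --- is asserted rather than established. Phrases like ``a Mills-ratio-type comparison bounds the supremum'' and ``one resolves it by a de l'H\^opital argument'' name the right tools but are not arguments: you never write down the derivative quotient to which l'H\^opital is applied (it involves $g'p+gp'$, and $p'$ must be eliminated via $(\tau p)'=-\theta(\cdot-m)p$ together with $\sigma^2=g\rho$; this is exactly where the two-sided bounds in (i)/(iii) and the existence of $\lim g'$ in (ii)/(iv) are consumed), and you never verify that a finite limit superior actually results. Worse, your treatment of the case $u=\infty$ contains a concrete error: from $\varliminf_{x\to\infty}\sigma^2(x)>0$ you conclude $\tau(x)^2p(x)\asymp p(x)$ and drop $\tau$ from the denominator, but in the heavy-tailed Pearson cases that motivate this paper one has $\sigma^2(x)\sim b_2x^2$, so only the one-sided bound $\tau^2p\geq c\,p$ holds, and discarding the growing factor $\tau(x)^2$ is not an innocuous loss. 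For instance, with $p(x)\sim x^{-\alpha-1}$ and $\tau(x)\sim cx^2$ one computes that the quantity $\frac{|x-m|}{\tau(x)^2p(x)}\int_x^\infty(\cdots)\tau(y)p(y)\,dy$ stays bounded, whereas the weakened version $\frac{|x-m|}{p(x)}\int_x^\infty(\cdots)\tau(y)p(y)\,dy$ grows like $x^3$. The endpoint analysis must therefore retain $g^2$ (equivalently $\tau^2$) in the denominator and genuinely use (i)--(ii); until that computation is carried out, and its mirror image at $l$, the proposal is a correct reformulation of the problem rather than a proof.
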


\subsection{Pearson diffusions}
\label{pearsondiffsubsec}

Pearson diffusions are It\=o diffusions with mean
reverting linear drift whose squared diffusion coefficient is a quadratic
polynomial, i.e., a stationary solution of the stochastic differential equation 
\begin{equation}
  \label{eq:29}
  \diff{X_t} = - \theta (X_t-m) \diff{t} + \sqrt{2\theta b(X_t)} \diff{B_t},
\end{equation}
where
\begin{equation*}
b(x)=b_2x^2+b_1x+b_0.
\end{equation*}
Here, $m,b_2,b_1,b_0$ are real constants, $\theta >0$ determines the speed of
mean reversion and $m$ is the stationary mean. Recall that the scale and speed
densities $s$ and $p$, respectively, are defined as 
\begin{equation*}
  s(x) = \exp \left(-2 \int_{x_0}^x \frac{a(u)}{\sigma^2(u)} \diff{u} \right)
  \qquad \text{ and } \qquad
  p(x) = \frac{1}{s(x) \sigma^2(x)}.
\end{equation*}
In our case, we have the relation
\begin{equation}
  \label{eq:6}
  p'(x) = - \frac{(2b_2+1)x -m+b_1}{b_2x^2+b_1x+b_0} \, p(x),
\end{equation}
which was originally used by Pearson (see \cite[page 360]{pearson_contributions_1895}) to introduce these distributions. From~\eqref{eq:6} one also sees that the class of Pearson diffusions is
closed under linear transformations. Explicitly, if $X_t$ satisfies the
stochastic differential equation~\eqref{eq:29}, then $\widetilde{X}_t = \gamma X_t 
+ \delta$ satisfies
\begin{equation*}
  \diff{\widetilde{X}_t} = \widetilde{a}(\widetilde{X}_t) \diff{t} +
  \widetilde{\sigma}(\widetilde{X}_t) \diff{B_t}, 
\end{equation*}
where $\widetilde{a}(x)=\theta \left( x - \gamma m + \delta \right)$ and
\begin{equation*}
  \widetilde{\sigma}^2(x) = 2 \theta \left( b_2x^2+\left( b_1\gamma-2b_2\delta
    \right)x + b_0\gamma^2-b_1\gamma\delta+b_2\delta^2 \right).
\end{equation*}
Up to such linear transformations, Pearson diffusions can be categorized into
the six classes listed below together with their invariant distributions,
densities, means and diffusion coefficients. A detailed analysis and
classification of Pearson diffusions can for example be found in
\cite{johnson_continuous_1994,johnson_continuous_1995,forman_pearson_2008}.

\begin{enumerate}[1.]
\item Gaussian distribution with parameters $m \in \R$ and $\sigma
  >0$. It has state space $\R$, mean $m$, as well as density function and diffusion coefficients given by   
\begin{align*}
p(x) \propto e^{-\frac{(x-m)^2}{2\sigma^2}}, \qquad b(x) = \sigma^2.
\end{align*}
The Gaussian distribution has moments of all orders.
\item Gamma distribution with parameters $\alpha, \beta >0$. It has
  state space $(0,\infty)$, mean $\frac{\alpha}{\beta}$, as well as density function and diffusion coefficients given by   
\begin{align*}
p(x) \propto x^{\alpha -1}e^{-\beta x}, \qquad b(x) = \frac{x}{\beta}.
\end{align*}
The Gamma distribution has moments of all orders.
\item Beta distribution with parameters $\alpha, \beta >0$. It has
  state space $(0,1)$, mean $\frac{\alpha}{\alpha+\beta}$, as well as density function and diffusion coefficients given by   
\begin{align*}
p(x) \propto x^{\alpha -1}(1-x)^{\beta-1}, \qquad b(x) = -\frac{x^2}{\alpha + \beta} + \frac{x}{\alpha + \beta}.
\end{align*}
The Beta distribution has moments of all orders.
\item Skew $t$-distribution with parameters $m, \nu, \lambda \in \R$,
  $\alpha>0$. It has state space $\R$, mean $\frac{(2m-1)\lambda + \alpha \nu}{2(m-1)}$, as well as density function and diffusion coefficients given by   
\begin{align*}
p(x) &\propto \left( 1+\left(
    \frac{x-\lambda}{\alpha}\right)^2\right)^{-m}e^{-\nu \arctan\left(
    \frac{x-\lambda}{\alpha}\right)},\\
 b(x) &= \frac{x^2}{2(m-1)} - \frac{\lambda x}{2(m-1)} + \frac{\lambda^2 + \alpha^2}{2(m-1)}.
\end{align*}
The skew $t$-distribution has moments of order $p$ for $p < 2m-1$.
\item Inverse gamma distribution with parameters $\alpha, \beta >0$. It has
  state space $(0,\infty)$, mean $\frac{\beta}{\alpha-1}$, as well as density function and diffusion coefficients given by   
\begin{align*}
p(x) \propto x^{-(\alpha-1)}e^{-\frac{\beta}{x}}, \qquad b(x) = \frac{x^2}{\alpha -1}.
\end{align*}
The inverse gamma distribution has moments of order $p$ for $p < \alpha$.
\item $F$-distribution with parameters $d_1, d_2 >0$. It has
  state space $(0,\infty)$, mean $\frac{d_2}{d_2 - 2}$, as well as density function and diffusion coefficients given by   
\begin{align*}
p(x) \propto x^{\frac{d_1}{2}-1}\left(1+\frac{d_1}{d_2}x \right)^{-\frac{d_1+d_2}{2}}, \qquad b(x) = \frac{2x^2}{d_2 -2} + \frac{2d_2 x}{d_1(d_2 -2)}.
\end{align*}
The $F$-distribution has moments of order $p$ for $p < \frac{d_2}{2}$.
\end{enumerate}
~\\
Pearson diffusions are particular (one-dimensional) examples fitting the Markov
triple structure introduced in Subsection \ref{s-10}. 
The generator $L$ acts on $L^2(E,\mu)$ via
\begin{equation*}
  L f(x) = -(x-m) f'(x) + b(x) f''(x),
\end{equation*}
where $b$ is the quadratic polynomial appearing in \eqref{eq:29}. Its
invariant measure $\mu$ is a Pearson distribution and it is
furthermore symmetric, ergodic and diffusive (in the sense of
\eqref{eq:30}). The set $\Lambda$ of eigenvalues of $L$ is given by
\begin{equation}
  \label{eq:19}
  \Lambda = \left\{ -n \left( 1 - (n-1)b_2 \right)\theta \, \colon \, n \in \N_0, \, b_2 < \frac{1}{2n-1} \right\}
\end{equation}
and the corresponding eigenfunctions are the well-known orthogonal polynomials
associated with the respective laws (Hermite, Laguerre and Jacobi polynomials for the Gaussian,
Gamma and Beta distributions, respectively, and Romanovski-Routh, Romanovski-Bessel and Romanovski-Jacobi polynomials for the skew $t$-, inverse gamma and $F$-distributions.
From formula \eqref{eq:19}, we see that polynomials up to
degree $n=\lfloor\frac{1+b_2}{2b_2}\rfloor$ are (square integrable) eigenfunctions, so
that $\mu$ has moments up to order $2n = \lfloor  1
+ 1/b_2 \rfloor$. 
Note that the cardinality of $\Lambda$ is infinite if $b_2\leq0$ and
finite if $b_2>0$. Consistent with the general theory of Markov
generators presented in Subsection \ref{s-10}, zero is always contained
in $\Lambda$ and all other eigenvalues are negative.
\\~\\
The structure of the spectrum $S$ of such a Pearson generator can thus be
described as follows.
\begin{enumerate}[(i)]
\item If $\mu$ is a Gaussian, Gamma or Beta distribution, then $S$ is purely discrete and  consists of infinitely many eigenvalues, each of multiplicity one. These eigenvalues are the negative integers including zero in the Gaussian and Gamma case. Eigenfunctions are the associated orthogonal polynomials (Hermite, Laguerre or Jacobi).
\item If $\mu$ is a skew $t$-, inverse Gamma or scaled $F$-distribution, then $S$
  contains a discrete and a continuous part. The discrete part consists of only finitely many eigenvalues.
\end{enumerate}
For later reference, we note that for a Pearson distribution $\mu$, the Stein
characterization~\eqref{eq:33} in Theorem~\ref{thm:3} becomes
\begin{equation}
      \label{eq:5}
      \Ex{ b(X)\mathds{1}_{(l,u)}(X) \varphi'(X) - (X-m) \varphi(X)} = 0,
\end{equation}
where again $b(x)=b_2x^2+b_1x+b_0$ is the associated quadratic
polynomial. Identity~\eqref{eq:5} gives a recursion formula for computing
the moments of a given Pearson distribution. Indeed, if the law of $X$ is a
Pearson distribution  with moments up to order $p+2$, then~\eqref{eq:5} with $\varphi(x)=x^{p+1}$
reads 
\begin{equation*}
(p+1) \Ex{b(X) X^p} - \Ex{(X-m)X^{p+1}}
  = 0.
\end{equation*}
This yields
\begin{equation*}
  \left( b_2(p+1)-1\right) \Ex{X^{p+2}} + \left( b_1(p+1)+m \right) \Ex{X^{p+1}}
  +  (p+1) b_0 \Ex{X^p} = 0
\end{equation*}
with $\Ex{X}=m$. Recall from the previous paragraph that the condition for the existence of moments of order $p$
is $p < 1 + b_2^{-1}$, so that four moments exist if and only if $b_2 <
\frac{1}{3}$. In this case, we start with with $\Ex{X}=m$ and get
\begin{align*}
  \Ex{X^2}
  &=
    \frac{(b_1+m)m+b_0}{1-b_2},
  \\
  \Ex{X^3}
  &=
    \frac{(2b_1+m)\left( \left( b_1+m \right)m+b_0 \right)}{(1-b_2)(1-2b_2)}
    +
    \frac{2b_0m}{1-2b_2},
  \\
  \Ex{X^4}
  &=
    \frac{(3b_1+m)(2b_1+m)\left( \left( b_1+m \right)m+b_0
    \right)}{(1-b_2)(1-2b_2)(1-3b_2)}
    +
    \frac{(3b_1+m)2b_0m}{(1-2b_2)(1-3b_2)}
  \\
  &\phantom{=1}
    +
    \frac{3b_0 \left( \left( b_1+m \right)m+b_0 \right) }{1-3b_2}.
\end{align*}
For further analysis of the spectrum of such Pearson generators and
general motivation on studying Pearson diffusions, see \cite{avram_spectral_2013}.
\section{Main results}
\label{mainresultssec}

Throughout this section, we always work in the Markov setting introduced in
Subsection~\ref{s-10}. We thus have a probability space
$(E,\mathcal{F},\mu)$ and the two operators $L$ and $\Gamma$ with
their $L^2$-domains $\mathcal{D}(L)$ and $\mathcal{D}(\mathcal{E})\times
\mathcal{D}(\mathcal{E})$ respectively, where $\mathcal{D}(L)
\subseteq \mathcal{D}(\mathcal{E}) \subseteq L^2(E,\mu)$. As is customary in this context, we continue to use the integral
notation for mathematical expectation, so that for example the expectation of a
random variable $G \in L^1(E,\mu)$ is denoted by $\int_E^{} G \diff{\mu}$.

\subsection{Carr\'e du champ characterization}
\label{s-2}

As a first result, we show how the Stein characterization~\eqref{eq:33} can
naturally be translated into a condition involving the carré du champ operator
$\Gamma$. To do so, we need the pseudo-inverse $L^{-1}$ of $L$,
satisfying for any
$X \in D(L)$,
\begin{equation}
  \label{eq:43}
  LL^{-1} X = L^{-1}L X = X - \pi_0(X),
\end{equation}
where $\pi_0(X)=\int_E^{} X \diff{\mu}$ denotes the orthogonal projection
of $X$ onto $\on{ker}(L)$ (recall that the kernel of $L$ by assumption only
consists of constants). For completeness, we recall how this
pseudo-inverse is constructed.  By self-adjointness of $L$, considered as an operator on $\mathcal{D}(L)$,
we have that $\mathcal{D}(L) = \on{ker}(L) \oplus (\on{ran}(L) \cap
\mathcal{D}(L))$. Therefore, we can define $L^{-1}$ on $\on{ran}(L) \cap
\mathcal{D}(L)$ (as $L$ is injective there) and then extend it to
$\mathcal{D}(L)$ by setting $L^{-1}X=0$ if $X \in
\on{ker}(L)$. 
\\~\\
We are now ready to state the announced
carr\'e du champ characterization.

\begin{theorem}
  \label{thm:5}
  Let
$\nu$ be a probability measure admitting a density $p$ with support $(l,u) \subseteq \R$, $-\infty \leq
l < u \leq +\infty$, such that $\int_{\R}^{}\abs{x}p(x)dx < \infty$ and
$\int_{\R}^{}xp(x)dx =m$. Define the function
$\tau(x)=\frac{1}{2}\sigma^2(x)\mathds{1}_{(l,u)}(x)$, $x \in \R$, where $\sigma^2$ is defined in terms
of $p$ by~\eqref{eq:32}. Let $G \in \mathcal{D}(L)$ with an
absolutely continuous distribution and mean $m$. Then $G$ has distribution $\nu$ if, and only if,
\begin{equation*}
\Gamma(G,-L^{-1}G) = \theta^{-1}\tau(G)
\end{equation*}
almost surely. 
\end{theorem}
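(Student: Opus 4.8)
The plan is to connect the carré du champ condition $\Gamma(G,-L^{-1}G) = \theta^{-1}\tau(G)$ with the Stein characterization~\eqref{eq:33} of Theorem~\ref{thm:3}, and then invoke the latter to conclude that $G$ has distribution $\nu$. The bridge between the two is the integration by parts formula~\eqref{eq:21} together with the diffusion property~\eqref{eq:24}. Concretely, for a smooth function $\varphi$ set $Y = -L^{-1}G$, so that $LY = -(G - m)$ by~\eqref{eq:43} (using $\pi_0(G) = m$). Then
\begin{equation*}
  \int_E^{} \theta^{-1}(G-m)\varphi(G) \diff{\mu}
  = -\theta^{-1}\int_E^{} \varphi(G) \, LY \diff{\mu}
  = \theta^{-1}\int_E^{} \Gamma(\varphi(G), Y) \diff{\mu}
  = \theta^{-1}\int_E^{} \varphi'(G) \, \Gamma(G, Y) \diff{\mu},
\end{equation*}
where the second equality is the integration by parts formula and the third is the diffusion property~\eqref{eq:24}. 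Rearranging, this says
\begin{equation*}
  \int_E^{} \varphi'(G)\, \Gamma(G,-L^{-1}G) \diff{\mu}
  = \theta^{-1}\int_E^{} (G-m)\varphi(G) \diff{\mu}
\end{equation*}
for all admissible $\varphi$, and this identity holds for \emph{any} $G \in \mathcal{D}(L)$ — it is not yet the characterization, just an exact algebraic relation coming from the Markov structure.

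For the ``if'' direction, assume $\Gamma(G,-L^{-1}G) = \theta^{-1}\tau(G)$ almost surely. Substituting into the displayed identity gives $\int_E^{} \theta^{-1}\tau(G)\varphi'(G)\diff{\mu} = \theta^{-1}\int_E^{}(G-m)\varphi(G)\diff{\mu}$, i.e. $\E(\tau(G)\varphi'(G) - \theta(G-m)\varphi(G)) = 0$ for all suitable $\varphi$. Since $G$ has an absolutely continuous distribution by hypothesis, part~(ii) of Theorem~\ref{thm:3} applies and forces $G$ to have distribution $\nu$. For the ``only if'' direction, assume $G$ has distribution $\nu$. We want to show the almost sure identity $\Gamma(G,-L^{-1}G) = \theta^{-1}\tau(G)$. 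From part~(i) of Theorem~\ref{thm:3}, $\E(\tau(G)\varphi'(G) - \theta(G-m)\varphi(G)) = 0$, which combined with the structural identity above yields $\int_E^{} \varphi'(G)\big(\Gamma(G,-L^{-1}G) - \theta^{-1}\tau(G)\big)\diff{\mu} = 0$ for all smooth $\varphi$. The point is now that $\varphi'(G)$ ranges over a rich enough family: since $\varphi$ is arbitrary smooth, $\psi = \varphi'$ is an arbitrary smooth function, so $\E\big[\psi(G)\,(\Gamma(G,-L^{-1}G) - \theta^{-1}\tau(G))\big] = 0$ for all smooth bounded $\psi$. One then concludes that $\E[\,\cdot\mid G\,]$ of the bracketed quantity vanishes, and — because both $\Gamma(G,-L^{-1}G)$ and $\tau(G)$ are $\sigma(G)$-measurable (the former by the diffusion property, which guarantees $\Gamma(G,-L^{-1}G)$ is a function of $G$ alone when one writes $-L^{-1}G$ appropriately, or more carefully by a direct argument) — the bracketed quantity is itself zero almost surely.

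The step I expect to be the main obstacle is precisely the measurability/closure issue in the ``only if'' direction: showing that $\Gamma(G,-L^{-1}G)$ is almost surely equal to a (deterministic) function of $G$, so that the vanishing of all conditional expectations $\E[\psi(G)(\cdots)] = 0$ can be upgraded to an almost sure identity. In the Wiener chaos setting this is classical (one writes $\Gamma(G,-L^{-1}G) = \tau_G(G)$ for a measurable $\tau_G$), but in the abstract Full Markov Triple framework it requires invoking the diffusion property carefully, or alternatively arguing that the map $\varphi \mapsto \int \varphi'(G)\Gamma(G,-L^{-1}G)\diff\mu$ defines, via the absolute continuity of the law of $G$, a functional that determines $\Gamma(G,-L^{-1}G)$ up to $\mu$-null sets as a conditional expectation given $G$. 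There is also a minor technical point about which class of test functions $\varphi$ the above manipulations are valid for (one needs $\varphi$, $\varphi'$ and $\tau(G)\varphi'(G)$ to be integrable, matching the hypotheses of Theorem~\ref{thm:3}), and about $G$ having the correct mean $m$ so that $\pi_0(G) = m$ — but this is assumed outright in the statement. Modulo these regularity considerations, the proof is a direct translation between the Stein identity and the carré du champ identity using~\eqref{eq:21},~\eqref{eq:24} and~\eqref{eq:43}.
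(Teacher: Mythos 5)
Your derivation of the structural identity
\begin{equation*}
  \int_E^{} (G-m)\varphi(G)\diff{\mu} = \int_E^{} \varphi'(G)\,\Gamma(G,-L^{-1}G)\diff{\mu}
\end{equation*}
via the chain diffusion property $\to$ integration by parts~\eqref{eq:21} $\to$ $LL^{-1}G = G-m$ is exactly the paper's proof, and the reduction to Theorem~\ref{thm:3} is the same; for the ``if'' direction you and the paper coincide and are both complete. Where you go beyond the paper is in flagging the ``only if'' direction, and you have correctly located the soft spot: from $\int_E^{}\varphi'(G)\bigl(\Gamma(G,-L^{-1}G)-\theta^{-1}\tau(G)\bigr)\diff{\mu}=0$ for all $\varphi$ one only gets $\E{\left[\Gamma(G,-L^{-1}G)\mid G\right]}=\theta^{-1}\tau(G)$ a.s., and upgrading this to the unconditional almost sure identity needs $\Gamma(G,-L^{-1}G)$ to be $\sigma(G)$-measurable. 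However, your proposed repair is not valid: the diffusion property~\eqref{eq:24} only lets you pull $\varphi'(G)$ out of $\Gamma(\varphi(G),\cdot)$; it does not make $\Gamma(G,-L^{-1}G)$ a function of $G$ alone. Indeed, already on Wiener space $\Gamma(G,-L^{-1}G)=\langle DG,-DL^{-1}G\rangle_{\mathfrak{H}}$ is generically not $\sigma(G)$-measurable (which is precisely why the classical Nourdin--Peccati characterization is stated with a conditional expectation). The paper's own proof is silent on this point --- it simply asserts that the claim follows from Theorem~\ref{thm:3} --- so you have not missed anything the paper supplies; but your measurability argument should either be dropped or replaced by restating the ``only if'' direction with $\E{\left[\Gamma(G,-L^{-1}G)\mid G\right]}$ in place of $\Gamma(G,-L^{-1}G)$.
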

\begin{proof}
Let $\varphi \in \mathcal{C}^{\infty}(\R,\R)$ be such that $\tau(G)\varphi'(G) \in 
L^1(\Omega)$ and $(G-m)\varphi(G) \in L^1(\Omega)$. It suffices to prove that
\begin{equation}
  \label{eq:35}
  \int_E^{} (G-m) \varphi(G) \diff{\mu} = \int_E^{}  \varphi'(G) \Gamma(G,-L^{-1}G) \diff{\mu},
\end{equation}
as this implies
\begin{equation}
  \label{eq:31}
  \int_E^{} \tau(G) \varphi'(G) - \theta (G-m) \varphi(G) \diff{\mu}
  =
  \theta
  \int_E^{} \varphi'(G) \left( \theta^{-1} \tau(G) - \Gamma(G,-L^{-1}G) \right) \diff{\mu},
\end{equation}
so that the assertion follows from Theorem~\ref{thm:3}.
By the diffusion property, we have that
\begin{equation*}
  \Gamma(\varphi (G), -L^{-1}G) = \varphi'(G) \Gamma(F,-L^{-1}G).
\end{equation*}
Taking integrals, then applying the integration by parts formula
\eqref{eq:21}, and afterwards making use of the identity~\eqref{eq:43} yields
\begin{align*}
  \int_E^{} \varphi'(G) \Gamma(F,-L^{-1}G) \diff{\mu}
  &=
    \int_E^{} \Gamma(\varphi(G),-L^{-1}G) \diff{\mu}
  \\ &=
       \int_E^{} \varphi(G) LL^{-1} G \diff{\mu}
       \\ &= \int_E^{} \varphi(G) (G-m) \diff{\mu}.
\end{align*}
\end{proof}

Using Stein's method, we obtain the following quantitative version of Theorem~\ref{thm:5}.

\begin{theorem}
\label{thm:4}
  Let $\nu$ be a measure with density $p$ and let $\sigma^2$ be given
  by~\eqref{eq:32}. Assume that $\sigma^2$ satisfies the assumptions
  of Lemma \ref{lemmaedenviquez}, and let $\tau(x) =
  \frac{1}{2}\sigma^2(x)\mathds{1}_{(l,u)}(x)$, $x \in \R$. Furthermore, let $G \in \mathcal{D}(L)$ such that $\int_E^{}
  \tau(G) \diff{\mu} < \infty$ and $\int_E^{} G \diff{\mu} =
  m$. Finally, let $Z$ be a random variable with distribution
  $\nu$. Then one has
  \begin{equation}
    \label{eq:34}
d_{\mathscr{H}}(G,Z) \leq c_{\mathscr{H}}
\int_E^{}\abs{\Gamma(G,-L^{-1}G) - \theta^{-1}\tau(G)}d\mu,
\end{equation}
where $d_{\mathscr{H}}$ is an admissible distance for $G$ and $Z$, defined via~\eqref{distancegeneralform} using a separating class $\mathscr{H}$ of absolutely continuous test functions such
that $\sup_{h \in \mathscr{H}}\norm{h'}_{\infty} < \infty$ and $c_{\mathscr{H}}$ is a
positive constant depending solely on the class $\mathscr{H}$.
\end{theorem}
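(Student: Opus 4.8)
The plan is to run Stein's method in the form already prepared in Subsection~\ref{steinfordiff} and then convert the resulting functional of the Stein solution into a carré du champ expression exactly as in the proof of Theorem~\ref{thm:5}. Since $G$ has an absolutely continuous distribution, plugging $x = G$ into the Stein equation~\eqref{steineqinvmeasureofdiff}, taking expectations, and then the supremum over $h \in \mathscr{H}$ gives the representation~\eqref{distanceinsteinsmethodexpression}, i.e. $d_{\mathscr{H}}(G,Z) = \sup_{h \in \mathscr{H}}\abs{\E(\tau(G)f_h'(G)) - \theta\,\E((G-m)f_h(G))}$. Before anything else I would check that this is legitimate: $d_{\mathscr{H}}$ is assumed admissible for $G$ and $Z$, so $h(G) \in L^1(\Omega)$; since $\sup_{h}\norm{h'}_{\infty} < \infty$, Lemma~\ref{lemmaedenviquez} applies and yields $\norm{f_h'}_{\infty} \leq k\norm{h'}_{\infty}$ with $k$ independent of $h$, hence $f_h$ has at most linear growth, so $(G-m)f_h(G) \in L^1(\Omega)$ because $G \in \mathcal{D}(L) \subseteq L^2(E,\mu)$, and $\tau(G)f_h'(G) \in L^1(\Omega)$ because $\int_E \tau(G)\,d\mu < \infty$.

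The key step is to show that identity~\eqref{eq:31} from the proof of Theorem~\ref{thm:5}, which there is established only for $\varphi \in \mathcal{C}^{\infty}(\R,\R)$, remains valid with $\varphi$ replaced by the Stein solution $f_h$, which in general is only Lipschitz on $\R$ and $\mathcal{C}^1$ on $(l,u)$. I would obtain this by mollification: write $f_h^{\varepsilon} = f_h * \rho_{\varepsilon} \in \mathcal{C}^{\infty}(\R)$ for a standard mollifier $\rho_{\varepsilon}$, note $\norm{(f_h^{\varepsilon})'}_{\infty} \leq \norm{f_h'}_{\infty}$ and that $f_h^{\varepsilon} \to f_h$ and $(f_h^{\varepsilon})' \to f_h'$ pointwise at every continuity point of $f_h'$ (which, by the Stein ODE together with $\tau > 0$ on $(l,u)$ and continuity of $h$, is all of $(l,u)$, hence Lebesgue-almost everywhere, hence almost surely for the law of $G$ since the latter is absolutely continuous). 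Applying~\eqref{eq:31} to each $f_h^{\varepsilon}$ and letting $\varepsilon \to 0$ by dominated convergence then transfers the identity to $f_h$: on the left-hand side the integrand is dominated using $G \in L^2(E,\mu)$ and $\int_E \tau(G)\,d\mu < \infty$ together with $\norm{(f_h^{\varepsilon})'}_{\infty} \leq \norm{f_h'}_{\infty}$, while on the right-hand side $\abs{(f_h^{\varepsilon})'(G)\,\Gamma(G,-L^{-1}G)} \leq \norm{f_h'}_{\infty}\abs{\Gamma(G,-L^{-1}G)}$ with $\Gamma(G,-L^{-1}G) \in L^1(E,\mu)$, since $G$ and $-L^{-1}G$ both lie in $\mathcal{D}(L) \subseteq \mathcal{D}(\mathcal{E})$.

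With~\eqref{eq:31} available for $f_h$, the bound follows at once. For each $h \in \mathscr{H}$,
\begin{align*}
  \abs{\E(\tau(G)f_h'(G)) - \theta\,\E((G-m)f_h(G))}
  &= \theta\abs{\int_E f_h'(G)\left(\theta^{-1}\tau(G) - \Gamma(G,-L^{-1}G)\right) d\mu} \\
  &\leq \theta\,\norm{f_h'}_{\infty} \int_E \abs{\Gamma(G,-L^{-1}G) - \theta^{-1}\tau(G)}\,d\mu.
\end{align*}
Using $\norm{f_h'}_{\infty} \leq k\norm{h'}_{\infty} \leq k\sup_{h \in \mathscr{H}}\norm{h'}_{\infty}$ uniformly in $h$ and taking the supremum over $\mathscr{H}$ gives~\eqref{eq:34} with $c_{\mathscr{H}} = \theta\,k\,\sup_{h \in \mathscr{H}}\norm{h'}_{\infty}$ (collecting the constant of Lemma~\ref{lemmaedenviquez}, the reversion speed $\theta$, and the class constant).

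I expect the only genuine obstacle to be the passage from smooth $\varphi$ to $f_h$: one must make sure the pointwise convergence $(f_h^{\varepsilon})' \to f_h'$ takes place on a set of full measure for the law of $G$ --- which is where the absolute continuity of the distribution of $G$ and the continuity of $f_h'$ on $(l,u)$ (away from the boundary and from $\R \setminus (l,u)$, on which $\tau$ vanishes) are used --- and to produce the two dominating functions cleanly. Everything else is a rearrangement of the integration by parts formula~\eqref{eq:21} and the diffusion property~\eqref{eq:24} already exploited in the proof of Theorem~\ref{thm:5}, combined with the Stein estimate of Lemma~\ref{lemmaedenviquez}.
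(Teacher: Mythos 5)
Your proof follows the paper's argument exactly: the Stein representation \eqref{distanceinsteinsmethodexpression} of the distance, the identity $\int_E (G-m)f_h(G)\diff{\mu} = \int_E f_h'(G)\Gamma(G,-L^{-1}G)\diff{\mu}$ obtained as in Theorem~\ref{thm:5}, and the uniform bound $\norm{f_h'}_\infty \leq k\norm{h'}_\infty$ from Lemma~\ref{lemmaedenviquez}, arriving at the same constant $c_{\mathscr{H}} = k\theta\sup_{h\in\mathscr{H}}\norm{h'}_\infty$. The only point where you go beyond the paper is the mollification argument justifying the application of the carré du champ identity to the merely Lipschitz Stein solution $f_h$ --- a regularity issue the paper passes over with \emph{proceeding as in the proof of} Theorem~\ref{thm:3} --- and this is a legitimate strengthening of the same proof rather than a different route.
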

\begin{remark}
  Note that the Fortet-Mourier metric always satisfies the assumptions of
  Theorem~\ref{thm:4}. In concrete situations, when both the law $\nu$ and the
  generator $L$ are known, one can often take stronger distances such as
  Kolmogorov or total variation.
\end{remark}

\begin{proof}[Proof of Theorem \ref{thm:4}]
On the one hand, by using Stein's method for invariant measures of
diffusions (see Subsection \ref{steinfordiff}), we can write, using \eqref{distanceinsteinsmethodexpression}, 
\begin{equation}
  \label{expfromsteinmethod}
d_{\mathscr{H}}(G,Z) = \sup_{h \in \mathscr{H}} \abs{\int_E^{}\tau(G)f_h'(G)d\mu -\int_E^{} \theta(G-m)f_h(G)d\mu},
\end{equation}
where $f_h$ denotes the solution to the Stein equation
\eqref{steineqinvmeasureofdiff}. On the other hand, proceeding as in the proof
of Theorem~\ref{thm:3}, we have
  \begin{equation*}
    \int_E^{} (G-m) f_h(G) \diff{\mu}
=
      \int_E^{} f_h'(G) \Gamma(G,-L^{-1}G) \diff{\mu}.
    \end{equation*}
    Plugged into~\eqref{expfromsteinmethod} and applying the Hölder inequality,
    we obtain
  \begin{align*}
    d_{\mathscr{H}}(G,Z) &= \sup_{h \in \mathscr{H}}
                           \abs{\int_E^{}\left(\tau(G)f_h'(G) -\theta f_h'(G) \Gamma(G,-L^{-1}G)\right) \diff{\mu}}\\
                           &\leq \sup_{h \in \mathscr{H}} \norm{f_h'}_{\infty} \theta
    \int_E^{} \abs{
     \Gamma(G,-L^{-1}G) - \theta^{-1}\tau(G)
    }
    \diff{\mu},
\end{align*}
so that the assertion follows by Lemma~\ref{lemmaedenviquez} with
$c_{\mathscr{H}} = k \theta\sup_{h \in \mathscr{H}} \norm{h'}_{\infty} < \infty $.
\end{proof}

\subsection{Markov chaos and Four Moments Theorems}
\label{s-1}
This subsection introduces the concept of chaotic eigenfunctions, for
which the general bound obtained in Theorem \ref{thm:4} can further be
bounded by a
finite linear combination of moments. Chaotic eigenfunctions have
first been introduced in \cite{ledoux_chaos_2012} and
a more general definition has been given in \cite{azmoodeh_fourth_2014}. In
order to also be able to deal with heavy-tailed invariant measures, we have to
extend this definition once again by introducing the new notion of chaos grade.

We continue to assume as given a Markov structure as introduced in
Subsection~\ref{s-10} and denote the spectrum of the generator $L$ (defined on
$\mathcal{D}(L)$) by $S$. As $-L$ is non-negative and symmetric,
one has $S \subseteq (-\infty,0]$. Let $\Lambda \subseteq  S$ denote the set of eigenvalues of $L$. We
always have that $0 \in \Lambda$ as by assumption $L1=0$. Chaotic random variables are then defined as follows.

\begin{definition}
  \label{def:1}
An eigenfunction $F$ with respect to an eigenvalue $-\lambda$ of $L$ is called \textit{chaotic}, if there exists $\eta>1$ such that
  $-\eta \lambda$ is an eigenvalue of $L$ and
  \begin{equation}
    \label{eq:10}
    F^2 \in \bigoplus_{\substack{-\kappa \in \Lambda \\ \kappa \leq \eta \lambda}} \ker \left( L
      + \kappa \on{Id} \right).
  \end{equation}
  In this case, the smallest $\eta$ satisfying~\eqref{eq:19} is called the
  \emph{chaos grade} of $F$.
\end{definition}
In other words, an eigenfunction is called chaotic if its square can
be expressed as a sum of eigenfunctions.
\begin{remark}
  \begin{enumerate}[(i)]
  \item  As we assume that $L^2(E,\mathcal{F},\mu)$ is separable, the set $\Lambda$ and
    therefore the direct orthogonal sum~\eqref{eq:10} of eigenspaces is at most countable.
  \item The chaos grade is invariant under scaling of the generator, in the
    sense that if $F$ is a chaotic random variable of $L$ with chaos grade $\eta$,
    then the chaos grade of $F$ remains unchanged when viewed as a
    chaotic random variable of $\alpha L$ for any $\alpha \in \R$.
  \end{enumerate}
\end{remark}

Let us give some examples to illustrate the concept.

\begin{example}
  \begin{enumerate}[1.]
  \item A model example is the generator of a Pearson distribution and we will
    study this example in detail in Section \ref{secchaosstruct}. At this point, let us briefly
    illustrate\ the chaos grade concept by treating the concrete case of the
    Gaussian distribution $\mu$. Here, the generator is the one-dimensional
    Ornstein-Uhlenbeck generator, acting on $L^2(\R,\mu)$. As is well known, the
    spectrum of $L$ consists of the negative integers and zero, which all are eigenvalues
    with the respective Hermite polynomials as eigenfunctions (the Hermite
    polynomial $H_p$ of order $p$ being an eigenfunction with respect to the
    eigenvalue $-p$). The square of such a Hermite polynomial $H_p$ can of
    course be expressed as a linear combination of Hermite polynomials up to
    order $2p$ and this expansion is given by the product formula
    \begin{equation*}
      H_p^2(x) = \sum_{j=1}^{p} c_{p,j} H_{2(p-j)}(x),
    \end{equation*}
    where $c_{p,j} = k! \binom{p}{j}^2$. Therefore, the chaos grade of $H_p$ is
    $2$.
  \item\label{item:1}
    The preceding example can also be looked at in infinite dimensions. Here, the
    one-dimensional Gaussian distribution is replaced with Wiener measure and
    $L$ is the infinite dimensional Ornstein-Uhlenbeck generator. The spectrum
    of $L$ still consists of the negative integers and zero, with the
    eigenfunctions now being  multiple Wiener-It\=o integrals of
  the form $F=I_p(f)$ (so that $LI_p(f) = -p I_p(F)$). The product formula for such integrals says that
  \begin{equation*}
    F^2 = I_p(f)^2 = \sum_{j=0}^{p} c_{p,j} I_{2(p-j)}(f_j),
  \end{equation*}
  where the constants $c_{p,j} $ are defined as in the previous example and the
  kernels $f_j$ are given in terms of so-called contractions of the original
  kernel $f$. This shows that any such multiple Wiener-It\=o integral is a chaotic
  eigenfunction in the sense of Definition~\ref{def:1} with chaos grade $2$.
\item Another example in dimension one is obtained by taking $L$ to be the Jacobi generator acting on
   $L^2([0,1],\nu)$, with invariant measure $\nu$ given by $\nu(\diff{x})= c_{\alpha,\beta}
   x^{\alpha-1}(1-x)^{\beta-1} 1_{[0,1]}(x) \diff{x}$ for some positive parameters
   $\alpha,\beta$. Then $L$ is such that
   \begin{equation*}
     Lf(x) = x(1-x) f''(x) + (\alpha - (\alpha+\beta)x) f'(x).
   \end{equation*}
   It is well known that the eigenvalues of $L$ are given by the
   Jacobi polynomials. The chaos grade of an eigenfunction associated
   to the eigenvalue $\lambda_n=-n \left( 1+\frac{n-1}{\alpha+\beta} \right)$ is given
   by $2 \left( 1+\frac{n}{n-1+\alpha+\beta} \right)$ (see Section
   \ref{secchaosstruct} for a full treatment of chaos grade
   characterizations). Note that the chaos grade in this case is no
   longer 2 and depends on the eigenvalue the eigenfunction is
   associated to. As in the Wiener case, a tensorization
   procedure (see Section \ref{secchaosstruct}) allows to generalize
   this example to higher dimensions.
  \end{enumerate}
  
\end{example}
\begin{remark}
For a systematic study of the chaos grades of eigenfunctions of
Pearson generators, see Section \ref{secchaosstruct}.
\end{remark}

We are now ready to prove Four Moments Theorems for Pearson
distributions. In all that follows, $F$ will denote an eigenfunction
of $L$, which is necessarily centered, and $G=F+m$ a translated version of $F$
which has then expectation $m \in \R$ as in the previous section. Furthermore, as
the six classes of Pearson diffusions given by~\eqref{eq:29} are 
invariant under linear transformations (see Section~\ref{pearsondiffsubsec}), we
assume from here on without loss of generality that $\theta = \frac{1}{2}$. 

\begin{theorem}
  \label{thm:1}
Let $\nu$ be a Pearson distribution associated to the diffusion given
  by~\eqref{eq:29} with mean $m$ and diffusion coefficient 
  $\sigma^2(x)=b(x)=b_2x^2+b_1x+b_0$, where $b_0, b_1, b_2 \in \R$.
  Let $F$ be a chaotic eigenfunction of $L$ with respect to the eigenvalue $-\lambda$, chaos grade $\eta$ and
  moments up to order $4$.  Set $G=F+m$. Then, if $\eta \leq 2(1-b_2)$, one has
  \begin{equation}
    \label{eq:23}
    \int_E^{} \left( \Gamma(G,-L^{-1}G) - b(G) \right)^2 \diff{\mu} \\\leq
    2
    \left( 1-b_2-\frac{\eta}{4}\right)\int_E^{}U(G)\diff{\mu},
  \end{equation}
whereas if $\eta > 2(1-b_2)$, one has

\begin{multline}
  \label{eq:12}
     \int_E^{} \left( \Gamma(G,-L^{-1}G) - b(G) \right)^2 \diff{\mu} \\\leq
    2 \left( 1-b_2-\frac{\eta}{4}\right)\int_E^{}U(G)\diff{\mu}+\frac{\xi(1-b_2)}{2}\int_E^{} Q^2(G)\diff{\mu},
  \end{multline}
  where
\begin{equation*}
\xi = \eta - 2(1-b_2)> 0,
\end{equation*}
and where the polynomials $Q$ and $U$ are
  given respectively by
  \begin{equation*}
Q(x) = x^2 + \frac{2(b_1+m)}{2b_2-1}x + \frac{1}{b_2-1}\left( b_0 + \frac{m(b_1+m)}{2b_2-1} \right),
\end{equation*}
and
\begin{equation*}
U(x) = (1-b_2)Q^2(x) - \frac{1}{12}(Q'(x))^3(x-m).
\end{equation*}
\end{theorem}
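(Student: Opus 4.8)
The plan is to expand the quantity $\Gamma(G,-L^{-1}G) - b(G)$ into its spectral components and control each one. Since $F$ is an eigenfunction of $L$ with eigenvalue $-\lambda$, we have $-L^{-1}F = \lambda^{-1}F$, so $\Gamma(G,-L^{-1}G) = \Gamma(F,-L^{-1}F) = \lambda^{-1}\Gamma(F,F)$ (constants drop out of $\Gamma$). Now $\Gamma(F,F) = \tfrac12(L(F^2) - 2FLF) = \tfrac12 L(F^2) + \lambda F^2$. By the chaos hypothesis, $F^2 = \sum_{\kappa} J_\kappa$ where $J_\kappa = \pi_{-\kappa}(F^2)$ is the projection onto $\ker(L+\kappa\,\mathrm{Id})$ and the sum runs over $-\kappa \in \Lambda$ with $0 \le \kappa \le \eta\lambda$. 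Hence $\tfrac12 L(F^2) + \lambda F^2 = \sum_\kappa (\lambda - \tfrac{\kappa}{2}) J_\kappa$, and therefore
\begin{equation*}
\Gamma(G,-L^{-1}G) = \sum_{\kappa} \Bigl(1 - \frac{\kappa}{2\lambda}\Bigr) J_\kappa.
\end{equation*}
The first step is thus to identify $b(G)$ with (most of) the low-order part of this sum. Using the Stein/Pearson recursion, $b(G)$ is a quadratic in $G = F+m$, so it lies in the span of eigenfunctions of $L$ up to the degree-two level; one computes that $b(G) = \sum_{\kappa \le 2(1-b_2)\lambda}(1 - \tfrac{\kappa}{2\lambda}) J_\kappa$ exactly — this is where the quadratic polynomial $Q$ and the threshold $\eta_0 := 2(1-b_2)$ enter, and I would verify this identity by matching the eigenspace decomposition of $G^2$ against the explicit Pearson moment recursion (the polynomial $U$ is designed precisely so that $\int U(G)\,d\mu$ equals a specific combination of the squared norms $\|J_\kappa\|_2^2$ for the "small" $\kappa$).

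The second step is to subtract. We get
\begin{equation*}
\Gamma(G,-L^{-1}G) - b(G) = \sum_{\kappa > 2(1-b_2)\lambda}\Bigl(1 - \frac{\kappa}{2\lambda}\Bigr) J_\kappa.
\end{equation*}
By orthogonality of the eigenspaces in $L^2(E,\mu)$,
\begin{equation*}
\int_E \bigl(\Gamma(G,-L^{-1}G) - b(G)\bigr)^2 d\mu = \sum_{\kappa > 2(1-b_2)\lambda}\Bigl(1 - \frac{\kappa}{2\lambda}\Bigr)^2 \|J_\kappa\|_2^2.
\end{equation*}
For each term in this sum we have $2(1-b_2)\lambda < \kappa \le \eta\lambda$, so the coefficient satisfies $\bigl(1 - \tfrac{\kappa}{2\lambda}\bigr)^2 \le \bigl(1-\tfrac{\kappa}{2\lambda}\bigr)\cdot\bigl(1 - (1-b_2)\bigr)$ when $1 - \tfrac{\kappa}{2\lambda} \ge 0$... more carefully, I would write $\bigl(1-\tfrac{\kappa}{2\lambda}\bigr)^2 = \bigl(1-\tfrac{\kappa}{2\lambda}\bigr)\bigl(b_2 - (\tfrac{\kappa}{2\lambda} - (1-b_2))\bigr)$ and bound the second factor by $b_2$ minus a nonnegative quantity, or alternatively split off the "excess" $\tfrac{\kappa}{2\lambda} - (1-b_2) =: t_\kappa \in (0, \tfrac{\eta}{2} - (1-b_2)] = (0, \tfrac{\xi}{2}]$. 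The point is that each coefficient is a product of two linear factors, one of which is uniformly bounded by the constant $1 - b_2 - \tfrac{\eta}{4}$ plus a correction of size controlled by $\xi$, and the other reproduces (up to the factor $1-b_2$) the corresponding coefficient in the $U$-decomposition or the $Q^2$-decomposition. In the regime $\eta \le \eta_0$ the excess sum is empty (or the correction term vanishes), giving \eqref{eq:23}; in the regime $\eta > \eta_0$ the single extra layer $\kappa \approx \eta\lambda$ survives and its norm is bounded by $\int Q^2(G)\,d\mu$ (since $Q(G)$ captures the top eigenspace of $G^2$), yielding the extra $\tfrac{\xi(1-b_2)}{2}\int Q^2(G)\,d\mu$ term in \eqref{eq:12}.

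The main obstacle will be the bookkeeping in the first step: establishing the exact identity $b(G) = \sum_{\kappa \le \eta_0\lambda}(1 - \tfrac{\kappa}{2\lambda})J_\kappa$ and the precise formula $\int U(G)\,d\mu = \sum_{\kappa \le \eta_0\lambda}(\text{const}_\kappa)\|J_\kappa\|_2^2$ with constants matching $\bigl(1-\tfrac{\kappa}{2\lambda}\bigr)^2 / \bigl(2(1-b_2-\tfrac{\eta}{4})\bigr)$. This requires knowing exactly which eigenvalues $\kappa$ occur below the threshold (from \eqref{eq:19}, the eigenvalues of a Pearson generator are $\kappa_n = n(1-(n-1)b_2)\theta = \tfrac{n}{2}(1-(n-1)b_2)$ after normalizing $\theta = \tfrac12$, so for $F$ of "degree" $n$ one has $\lambda = \kappa_n$ and the relevant small $\kappa$ are $\kappa_0 = 0, \kappa_1, \ldots$) and then performing the diffusion-property computation $L(G^2)$ together with the moment recursion \eqref{eq:5} to pin down the coefficients. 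The identity $U(x) = (1-b_2)Q^2(x) - \tfrac{1}{12}(Q'(x))^3(x-m)$ should drop out of this computation after using the diffusion property $\Gamma(G,G) = b(G)$-type relations for the chaotic/quadratic part; the cubic term $(Q'(x))^3(x-m)$ is the signature of having used $L(Q(G)) = Q'(G)LG + Q''(G)\Gamma(G,G)$ and integrating against $Q(G)$. Once these two algebraic identities are in hand, the inequalities \eqref{eq:23} and \eqref{eq:12} follow from the elementary termwise coefficient bound described above.
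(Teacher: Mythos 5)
Your spectral expansion of $\Gamma(G,-L^{-1}G)=\sum_{\kappa}\bigl(1-\tfrac{\kappa}{2\lambda}\bigr)J_\kappa$ is correct and is essentially the same starting point as the paper (which writes it as $\tfrac{1}{2\lambda}(L+2\lambda\on{Id})(G-m)^2$). However, the identity on which your whole argument rests, namely $b(G)=\sum_{\kappa\le 2(1-b_2)\lambda}\bigl(1-\tfrac{\kappa}{2\lambda}\bigr)J_\kappa$, is false, and the failure is not a bookkeeping issue. Since $b(G)=b_2F^2+(2b_2m+b_1)F+\text{const}$ and $F^2=\sum_\kappa J_\kappa$, the quadratic part $b_2F^2$ contributes $b_2 J_\kappa$ to \emph{every} eigenspace up to $\kappa=\eta\lambda$, not only the low ones, and the low-$\kappa$ coefficients $1-\tfrac{\kappa}{2\lambda}$ of $\Gamma(G,-L^{-1}G)$ do not match $b_2$ plus lower-order corrections. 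A decisive sanity check: if your identity held, then in the regime $\eta\le 2(1-b_2)$ the difference $\Gamma(G,-L^{-1}G)-b(G)$ would vanish identically, so by the carr\'e du champ characterization (Theorem \ref{thm:5}) \emph{every} chaotic eigenfunction with $\eta\le2(1-b_2)$ would already have the target law $\nu$; in the Gaussian case $b_2=0$, $\eta=2$, this would say every multiple Wiener--It\^o integral is Gaussian, and the fourth moment theorem would be vacuous. The correct identity, which the paper establishes, is
\begin{equation*}
\Gamma(G,-L^{-1}G)-b(G)=\frac{1}{2\lambda}\bigl(L+2(1-b_2)\lambda\on{Id}\bigr)Q(G)
=\sum_{\kappa}\Bigl(1-b_2-\frac{\kappa}{2\lambda}\Bigr)\pi_\kappa\bigl(Q(G)\bigr),
\end{equation*}
a sum over \emph{all} $\kappa\le\eta\lambda$ in which the operator kills only the single level $\kappa=2(1-b_2)\lambda$; the content of the theorem is precisely that the remaining components are controlled by a four-moment combination, not that they cancel.

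With the false identity removed, your second step has nothing to work with, and the hedged coefficient manipulations there do not yet constitute an argument. For reference, the paper proceeds by splitting $L+2(1-b_2)\lambda\on{Id}=(L+\eta\lambda\on{Id})-\xi\lambda\on{Id}$, using the spectral fact $\int_E Q(G)\,L(L+\eta\lambda\on{Id})Q(G)\diff{\mu}=-\sum_\kappa\kappa(\eta\lambda-\kappa)\norm{\pi_\kappa(Q(G))}_2^2\le0$ (valid because the chaos hypothesis confines the support to $0\le\kappa\le\eta\lambda$), and then converting the surviving quantity $\int_E Q(G)(L+2(1-b_2)\lambda\on{Id})Q(G)\diff{\mu}$ into $2\lambda\int_E U(G)\diff{\mu}$ via integration by parts and the diffusion property, $\int_E(Q'(G))^2\Gamma(G,G)\diff{\mu}=\tfrac16\int_E\Gamma\bigl((Q'(G))^3,G\bigr)\diff{\mu}=-\tfrac{\lambda}{6}\int_E(Q'(G))^3(G-m)\diff{\mu}$. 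You correctly anticipated this last cubic mechanism as the origin of the $(Q'(x))^3(x-m)$ term in $U$, but without the correct intermediate identity the proof does not go through.
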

\begin{remark}\hfill
  \label{s-6}
  \begin{enumerate}[(i)]
    \item Observe that both $\int_E^{}U(G)\diff{\mu}$ and
      $\int_E^{}Q^2(G)\diff{\mu}$ are linear combinations of the first
      four moments of $G$, i.e. there exists coefficients
      $c_j,d_j$, $j = 0,\ldots,4$ such that
\begin{equation*}
\int_E^{}U(G)\diff{\mu} =\sum_{j=0}^{4}c_j \int_E^{}G^jd\mu \quad
\text{and}\quad \int_E^{}Q^2(G)\diff{\mu} =\sum_{j=0}^{4}d_j \int_E^{}G^jd\mu.
\end{equation*}
The coefficients $c_j,d_j$ only depend on the coefficients of the polynomial $b$
and the mean $m$ of the target distribution and for convenience are given in Table \ref{tableofcoeffs}. We
provide some examples of such linear moment combinations below.
  \item Note that by the identities~\eqref{eq:13} and~\eqref{eq:36} in the
    forthcoming proof of Theorem~\ref{thm:1} and the Cauchy-Schwarz inequality,  
\begin{equation*}
\int_E^{}U(G)\diff{\mu} \leq 
\sqrt{\int_E^{}Q^2(G)\diff{\mu}}\sqrt{\int_E^{}\left(\Gamma(G,-L^{-1}G)
    - b(G) \right)^2 \diff{\mu} },
\end{equation*}
showing that the moment combination $\int_E^{}U(G)\diff{\mu}$ indeed vanishes for a
random variable $G$ having the law $\nu$ of the target distribution (as the $\Gamma$ expression is
zero if the law of $G$ is $\nu$ by Theorem \ref{thm:5}). 
\item In order to understand the presence of the additional moment combination 
\begin{equation*}
\frac{\xi(1-b_2)}{2}\int_E^{} Q^2(G)\diff{\mu}
\end{equation*}
in the bound \eqref{eq:12}, let $\tilde{L}$ be the Markov diffusion generator of the diffusion
\eqref{eq:29} with mean $m$ and diffusion coefficient $\sigma^2(x) =
b(x)$ as in the statement of Theorem \ref{thm:1}. Then, as will be
shown in Section \ref{secchaosstruct}, the first chaos of $\tilde{L}$
always contains eigenfunctions with law $\nu$, and those
eigenfunctions have chaos grade $\tilde{\eta} = 2(1-b_2)$ by
Proposition \ref{cor:4} for $n=1$. Therefore,
$\xi = \eta - \tilde{\eta}$ measures how much the chaos grade of $G$
exceeds $\tilde{\eta}$. If $G$ is replaced by a sequence $\left\{
  G_k\colon k \geq 0 \right\}$ with
chaos grades $\left\{ \eta_k\colon k\geq 0 \right\}$, as will be done in Proposition \ref{thm:7},
then in order to converge, it is necessary that $\eta_k$ converges to $\tilde{\eta}$.
  \end{enumerate}
\end{remark}

  \begin{table}[]
    \caption{Coefficients in the linear combinations of moments}
      \label{tableofcoeffs}
\centering
\begin{tabular}{lc}
\hline
j & $c_j$                                                                                          \\ \hline
0 & $\displaystyle \frac{ \left(b_0+\frac{m (b_1+m)}{2b_2-1}\right)^2}{1-b_2}+\frac{2 m(b_1+m)^3}{3 (2 b_2-1)^3}$ \\
1 & $\displaystyle \frac{4b_0(b_1+m)}{1-2b_2}+\frac{2 (b_1+m)^2 (b_1+2m(3b_2-1))}{3 (1-2 b_2)^3}$                 \\
2 & $\displaystyle -2 b_0-\frac{2 (b_1+m)^2}{2 b_2-1}$                                                            \\
3 & $\displaystyle -2 b_1-\frac{4 m}{3}$                                                                          \\
4 & $\displaystyle \frac{1}{3}-b_2$                                                                               \\ \hline
j & $d_j$                                                                                           \\ \hline
0 & $\displaystyle \frac{\left(b_0(2b_2-1)+m (b_1+m)\right)^2}{(1-2b_2)^2(1-b_2)^2}$                                 \\
1 & $\displaystyle \frac{4 (b_1+m) (b_0 (2 b_2-1)+m(b_1+m))}{(1-2 b_2)^2 (b_2-1)}$                                \\
2 & $\displaystyle \frac{2 \left(b_0(1-2b_2)^2 + (b_1+m) (2 b_1(b_2-1)+(4 b_2-3) m)\right)}{(1-b_2)(1-2 b_2)^2}$        \\
3 & $\displaystyle \frac{4 (b_1+m)}{2 b_2-1}$                                                                     \\
4 & $\displaystyle 1$                                                                                            
\end{tabular}
\end{table}

\begin{proof}[Proof of Theorem~\ref{thm:1}]
  As $LF=-\lambda F$ and $L1=0$, we have that $LG= -\lambda (G-m)$.  Also, by definition 
  $L^{-1}G=L^{-1}F=-\frac{1}{\lambda} F = - \frac{1}{\lambda} \left( G-m
  \right)$. Therefore, also using the fact that $\Gamma$ vanishes if any of its two
  arguments is a constant, it follows that  
\begin{align*}
    \Gamma \left( G,-L^{-1}G \right)
  &=
    \frac{1}{\lambda}
    \Gamma \left( G-m,G-m \right)
  \\ &=
       \frac{1}{2\lambda} \left(L + 2\lambda \on{Id}\right)(G-m)^2
  \\ &=
       \frac{1}{2\lambda} \left( L+2\lambda \on{Id} \right)(G^2 -2mG +
       m^2).
  \end{align*}
By observing that
\begin{equation}
  \label{eq:36}
\Gamma \left( G,-L^{-1}G \right) - b(G) = \frac{1}{2\lambda}(L+2(1-b_2)\lambda\on{Id})Q(G),
\end{equation}
we can write
  \begin{align}
    \int_E^{} \left( \Gamma(G,-L^{-1}G) - b(G) \right)^2 \diff{\mu}
     &= \int_E^{} \left(
       \frac{1}{2\lambda}(L+2(1-b_2)\lambda\on{Id})Q(G) \right)^2
       \diff{\mu} \notag \\
    &= \frac{1}{4\lambda^2}\int_E^{} \left(
       (L+\eta \lambda\on{Id})Q(G) -
      \xi\lambda Q(G) \right)^2
      \diff{\mu} \notag \\
    &= \frac{1}{4\lambda^2}\left(\int_E^{} \left(
       (L+\eta\lambda\on{Id})Q(G) \right)^2
       \diff{\mu} + R_{\eta}(G)\right),\label{eq:1}
  \end{align}
where 
\begin{align}
R_{\eta}(G) &= \xi^2\lambda^2  \int_E^{} Q^2(G)\diff{\mu}
              - 2\lambda\xi \int_E^{}Q(G)(L+\eta\lambda \on{Id})Q(G)
              \diff{\mu}\notag \\
  &= 
              -2\lambda\xi \int_E^{}Q(G)(L+2(1-b_2)\lambda \on{Id})Q(G)
    \diff{\mu} -\xi^2\lambda^2 \int_E^{} Q^2(G)\diff{\mu}. \label{eq:9}
\end{align}
As $L$ is
symmetric, 
  \begin{align}
       \int_E^{} \left(
       (L+\eta\lambda\on{Id})(Q(G)) \right)^2
       \diff{\mu} &= \int_E^{} Q(G)
                    (L+\eta\lambda\on{Id})^2 Q(G) \diff{\mu} \notag \\
    &= \eta\lambda\int_E^{}
      Q(G)(L+\eta\lambda\on{Id})Q(G)\diff{\mu}\notag \\
    &+
      \int_E^{}Q(G)L(L+\eta\lambda
      \on{Id})Q(G)\diff{\mu} \notag \\
    & \leq \eta\lambda\int_E^{}
      Q(G)(L+\eta\lambda\on{Id})Q(G)\diff{\mu}\notag\\
    &= \eta\lambda\int_E^{}
      Q(G)(L+2(1-b_2)\lambda\on{Id})Q(G)\diff{\mu}\notag\\
    &+ \eta \xi \lambda^2
      \int_E^{}Q^2(G)\diff{\mu},   \label{eq:2}
  \end{align}
where the inequality follows from the fact that 
\begin{equation*}
\int_E^{}Q(G)L(L+\eta\lambda
      \on{Id})Q(G)\diff{\mu} \leq 0.
\end{equation*}
Indeed, as by assumption  
\begin{equation*}
Q(G) =
\sum_{-\kappa \in \Lambda \colon \kappa \leq \eta \lambda}\pi_{\kappa}(Q(G)),
\end{equation*}
where $\pi_{\kappa}(Q(G))$ denotes the orthogonal projection of $Q(G)$
onto the eigenspace $\on{ker}(L+\kappa \on{Id})$, one has 
\begin{align*}
&\int_E^{}Q(G)L(L+\eta\lambda
  \on{Id})Q(G)\diff{\mu} \\
 &\qquad\qquad = \sum_{-\kappa \in S \colon \kappa \leq
                               \eta\lambda}^{} \int_E^{}\pi_{\kappa}(Q(G))L(L+\eta\lambda
      \on{Id})Q(G)\pi_{\kappa}(Q(G)) \diff{\mu} \\
 &\qquad\qquad = -\sum_{-\kappa \in S \colon \kappa \leq
                               \eta\lambda}^{} \kappa(\eta\lambda -
   \kappa)\int_E^{}\pi_{\kappa}(Q(G))^2 \diff{\mu} \leq 0.
\end{align*}
Plugging \eqref{eq:2} and \eqref{eq:9} into \eqref{eq:1} yields
  \begin{align*}
    \int_E^{} \left( \Gamma(G,-L^{-1}G) - b(G) \right)^2 \diff{\mu}
     &\leq \frac{\eta-2\xi}{4\lambda}\int_E^{}
      Q(G)(L+2(1-b_2)\lambda\on{Id})Q(G)\diff{\mu} \\
    &+\frac{\xi(1-b_2)}{2}\int_E^{} Q^2(G)\diff{\mu}.
  \end{align*}
In order to prove that 
\begin{equation}
  \label{eq:13}
\int_E^{}Q(G)(L+2(1-b_2)\lambda\on{Id})Q(G)\diff{\mu} = 2 \lambda \int_E^{}U(G)\diff{\mu},
\end{equation}
we use integration by parts and the diffusion property of
$\Gamma$ to write
\begin{align*}
\int_E^{}Q(G)L Q(G)
  \diff{\mu} &= -\int_E^{}\Gamma(Q(G),Q(G))\diff{\mu} \\
&= -
  \int_E^{}(Q'(G))^2\Gamma(G,G)\diff{\mu} \\
&= -
                                            \frac{1}{6}\int_E^{}\Gamma(\left( Q'(G) \right)^3,G)\diff{\mu} \\
  &= \frac{1}{6}\int_E^{}\left( Q'(G) \right)^3LG\diff{\mu} \\
  &= -\frac{\lambda}{6}\int_E^{}\left( Q'(G) \right)^3(G-m)\diff{\mu},
\end{align*}
which concludes the proof.
\end{proof}
By combining Theorem \ref{thm:1} with Theorem \ref{thm:4}, we obtain 
quantitative moment bounds for suitable distances.

\begin{proposition}
  \label{prop:1}
In the setting and with the notation of Theorem \ref{thm:1}, let $Z$ be a random variable
with distribution $\nu$. Then, if $\eta \leq 2(1-b_2)$, one has
  \begin{equation*}
     d_{\mathscr{H}}(G,Z) \\ \leq
    c_{\mathscr{H}} \sqrt{\left( 1-b_2-\frac{\eta}{4}\right)\int_E^{}U(G)\diff{\mu}},
  \end{equation*}
whereas if $\eta > 2(1-b_2)$, one has
\begin{equation*}
     d_{\mathscr{H}}(G,Z) \\ \leq
    c_{\mathscr{H}} \sqrt{    \left( 1-b_2-\frac{\eta}{4}\right)\int_E^{}U(G)\diff{\mu}+\frac{\xi(1-b_2)}{2}\int_E^{} Q^2(G)\diff{\mu}}.
  \end{equation*}
  Here, $d_{\mathscr{H}}$ denotes an admissible distance for $G$ and $Z$,
  defined via a separating class $\mathscr{H}$ of absolutely continuous test
functions such that $\sup_{h \in \mathscr{H}}\norm{h'}_{\infty} < \infty$. The positive
constant $c_{\mathscr{H}}$ depends solely on the
class $\mathscr{H}$. 
\end{proposition}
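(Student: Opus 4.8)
The plan is to chain the abstract Stein bound of Theorem~\ref{thm:4} with the quadratic-form estimate of Theorem~\ref{thm:1}; beyond matching up hypotheses, the only real step is one application of the Cauchy--Schwarz inequality. First I would check that Theorem~\ref{thm:4} applies to the present $\nu$ and $G$. Since $\nu$ is a Pearson law and $\theta=\frac{1}{2}$, the squared diffusion coefficient $2\theta b$ in~\eqref{eq:29} equals $b$, so $\sigma^2=b$; this $\sigma^2$ is positive on $(l,u)$ and has the endpoint behaviour required by Lemma~\ref{lemmaedenviquez}, as one sees by taking $g=\sigma^2$ itself (inspecting the six families of Subsection~\ref{pearsondiffsubsec} shows $\varliminf_{x\to u}\sigma^2(x)>0$ when $u=\infty$, and likewise at $l$, while $g'=b'$ has a limit in $[-\infty,+\infty]$ at each endpoint because $b$ is a polynomial). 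Moreover $G=F+m\in\mathcal{D}(L)$, it satisfies $\int_E G\diff{\mu}=m$ because the eigenfunction $F$ is centered (as recalled before Theorem~\ref{thm:1}), and $\int_E\tau(G)\diff{\mu}\le\frac{1}{2}\int_E\abs{b(G)}\diff{\mu}<\infty$ because $b$ is quadratic and $G$ has four moments. Hence Theorem~\ref{thm:4} gives
\begin{align*}
  d_{\mathscr{H}}(G,Z)
  &\le c_{\mathscr{H}}\int_E\abs{\Gamma(G,-L^{-1}G)-\theta^{-1}\tau(G)}\diff{\mu} \\
  &= c_{\mathscr{H}}\int_E\abs{\Gamma(G,-L^{-1}G)-b(G)\mathds{1}_{(l,u)}(G)}\diff{\mu}.
\end{align*}
Since $\Gamma(G,-L^{-1}G)=\lambda^{-1}\Gamma(G-m,G-m)\ge 0$ (as computed in the proof of Theorem~\ref{thm:1}), the last integrand is bounded pointwise by $\abs{\Gamma(G,-L^{-1}G)-b(G)}$ once one checks the sign of $b$ on $\R\setminus(l,u)$ (trivial when $(l,u)=\R$, elementary otherwise); removing the indicator this way is the one spot where the six classes must be examined individually.

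Next, since $\mu$ is a probability measure, the Cauchy--Schwarz inequality yields
\begin{equation*}
  \int_E\abs{\Gamma(G,-L^{-1}G)-b(G)}\diff{\mu}\le\left(\int_E\bigl(\Gamma(G,-L^{-1}G)-b(G)\bigr)^2\diff{\mu}\right)^{1/2}.
\end{equation*}
Theorem~\ref{thm:1} now bounds the integral under the square root: by~\eqref{eq:23} when $\eta\le 2(1-b_2)$ and by~\eqref{eq:12} when $\eta>2(1-b_2)$. Writing $A=\bigl(1-b_2-\frac{\eta}{4}\bigr)\int_E U(G)\diff{\mu}$ and $B=\frac{\xi(1-b_2)}{2}\int_E Q^2(G)\diff{\mu}$, these estimates read $\int_E(\Gamma(G,-L^{-1}G)-b(G))^2\diff{\mu}\le 2A$ and $\le 2A+B$ respectively; since the left-hand side is non-negative one has $2A\ge 0$ (resp.\ $2A+B\ge 0$), while $B\ge 0$ always. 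Hence $\sqrt{2A}=\sqrt{2}\,\sqrt{A}$ and, using $B\ge 0$, $\sqrt{2A+B}\le\sqrt{2(A+B)}=\sqrt{2}\,\sqrt{A+B}$. Combining the three displays and absorbing the factor $\sqrt{2}$ together with the constant of Theorem~\ref{thm:4} into a single constant still denoted $c_{\mathscr{H}}$ and depending only on $\mathscr{H}$ produces exactly the two asserted inequalities.

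In summary, all the analytic content is carried by Theorems~\ref{thm:4} and~\ref{thm:1}, and the argument above is essentially bookkeeping. The genuinely non-routine ingredients are the verification that $\sigma^2=b$ meets the boundary hypotheses of Lemma~\ref{lemmaedenviquez} for all six Pearson families — exactly where one might fear the heavy-tailed laws to fail, yet they do not — and the elimination of the indicator $\mathds{1}_{(l,u)}$ in passing from $\theta^{-1}\tau(G)$ to $b(G)$; I expect this indicator/sign discussion to be the main (and only mild) obstacle, everything else being immediate from the explicit classification of Subsection~\ref{pearsondiffsubsec}.
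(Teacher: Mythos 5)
Your overall route is exactly the paper's: the published proof of Proposition~\ref{prop:1} consists of a single sentence, namely that the hypotheses of Lemma~\ref{lemmaedenviquez} are verified by taking $g=\sigma^2$, after which Theorem~\ref{thm:4}, Cauchy--Schwarz and Theorem~\ref{thm:1} are chained together implicitly. Your verification of the hypotheses of Theorem~\ref{thm:4}, the Cauchy--Schwarz step, and the bookkeeping with the factor $\sqrt{2}$ (using $B\ge 0$ to write $\sqrt{2A+B}\le\sqrt{2}\sqrt{A+B}$) are all correct and in fact more careful than what the paper records.

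The one place where your added detail does not hold as stated is the elimination of the indicator. You reduce $\abs{\Gamma(G,-L^{-1}G)-b(G)\mathds{1}_{(l,u)}(G)}\le\abs{\Gamma(G,-L^{-1}G)-b(G)}$ to the claim that $b\le 0$ on $\R\setminus(l,u)$ (given $\Gamma(G,-L^{-1}G)\ge 0$), and you assert this sign check is elementary for all six families. It is true for the Gaussian and skew $t$ (where $(l,u)=\R$), for the Gamma ($b(x)=x/\beta\le 0$ on $x\le 0$) and for the Beta ($b(x)=x(1-x)/(\alpha+\beta)\le 0$ off $(0,1)$), but it fails for the inverse Gamma, where $b(x)=x^2/(\alpha-1)>0$ for $x<0$, and for the $F$-distribution, where $b(x)=\tfrac{2x}{d_2-2}\bigl(x+\tfrac{d_2}{d_1}\bigr)>0$ for $x<-d_2/d_1$. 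On the event $\{G\notin(l,u)\}$ one would then need $b(G)\ge 2\,\Gamma(G,-L^{-1}G)$ instead, which is not available; the crude alternative of adding $\int_{\{G\notin(l,u)\}}\abs{b(G)}\diff{\mu}$ via the triangle inequality produces a term not controlled by the first four moments. To be fair, the paper performs the same replacement silently (Theorem~\ref{thm:4} involves $\theta^{-1}\tau(G)=b(G)\mathds{1}_{(l,u)}(G)$ while Theorem~\ref{thm:1} bounds $\int(\Gamma(G,-L^{-1}G)-b(G))^2\diff{\mu}$ with no indicator), so you have correctly isolated the only delicate point of the argument; but your proposed resolution of it is incomplete for two of the six Pearson classes and would need an additional assumption (e.g.\ that $G$ takes values in $(l,u)$ almost surely) or a different estimate there.
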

\begin{proof}
We have to check that the function
$\sigma^2$ satisfies the assumptions of Lemma
\ref{lemmaedenviquez}. This is immediate by taking $g = \sigma^2$.
\end{proof}

At this point it is straightforward to state the following quantitative Four
Moments Theorems for approximation of any Pearson distribution admitting at
least four moments by a sequence of chaotic eigenfunctions.

\begin{theorem}
  \label{thm:7}
Let $\nu$ be a Pearson distribution associated to the
diffusion given by \eqref{eq:29} with mean $m$ and diffusion coefficient $\sigma^2(x)=b(x)=b_2x^2+b_1x+b_0$. For $k
  \in \N$, let $F_k$ be a chaotic eigenfunction with chaos grade $\eta_k$ of a Markov
  diffusion generator $L_k$ and let $G_k=F_k+m$. Furthermore, let
  $d_{\mathscr{H}}$ be an admissible distance for $\left\{ G_k \colon k \in \N
    \right\} \cup \left\{ Z \right\}$, defined via a separating class $\mathscr{H}$ of absolutely continuous test functions with uniformly
  bounded derivative. Then, if $\eta_k \leq 2(1-b_2)$, one has
  \begin{equation*}
    d_{\mathscr{H}}(G_k,Z) \leq c_{\mathscr{H}}
    \sqrt{
      \left( 1-b_2-\frac{\eta_k}{4} \right)
      \int_E^{} U(G_k) \diff{\mu}
    }
  \end{equation*}
  whereas if $\eta_k > 2(1-b_2)$, one has
  \begin{equation*}
    d_{\mathscr{H}}(G_k,Z) \leq c_{\mathscr{H}}
    \sqrt{
      \left( 1-b_2-\frac{\eta}{4} \right)
      \int_E^{} U(G_k) \diff{\mu}
      +
      \frac{\xi_k (1-b_2)}{2}
      \int_E^{} Q^2(G_k) \diff{\mu}
    },
  \end{equation*}
  where $\xi_k = \eta_k - 2(1-b_2)$. Here, $c_{\mathscr{H}}$ is a positive constant solely depending on the
  separating class $\mathscr{H}$. In particular, the following two conditions are
  sufficient for the sequence $\left\{ G_k\colon n\geq 0 \right\}$ to converge in distribution to $Z$:
  \begin{enumerate}[(i)]
  \item 
    \begin{equation}
\label{eq:44}
      \int_E^{} U(G_k) \diff{\mu} \to 0.
    \end{equation}
  \item For every subsequence $(\eta_{k_{r}})$ of $(\eta_k)$ such that
    $\eta_{k_r}>2(1-b_2)$ for every $r \in \N$ one has
    \begin{equation*}
      \sup_{r \in \N}
      \int_E^{} Q^2(G_{k_r}) \diff{\mu} < \infty
    \end{equation*}
    and $\eta_{k_r} \to 2(1-b_2)$.
\end{enumerate}
\end{theorem}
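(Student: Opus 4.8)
The plan is to derive Theorem~\ref{thm:7} essentially as a direct specialization of Proposition~\ref{prop:1}, applied uniformly across the sequence. First I would observe that for each fixed $k$, the random variable $G_k = F_k + m$, the generator $L_k$, and the target distribution $\nu$ satisfy all the hypotheses of Theorem~\ref{thm:1} and hence of Proposition~\ref{prop:1}: $F_k$ is a chaotic eigenfunction with chaos grade $\eta_k$ and moments up to order four, so $G_k \in \mathcal{D}(L_k)$ and has mean $m$, and the diffusion coefficient $\sigma^2 = b$ of the Pearson target satisfies the assumptions of Lemma~\ref{lemmaedenviquez} by taking $g = \sigma^2$, exactly as in the proof of Proposition~\ref{prop:1}. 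The crucial point is that the constant $c_{\mathscr{H}}$ produced by Theorem~\ref{thm:4} depends only on the separating class $\mathscr{H}$ (through $k$ from Lemma~\ref{lemmaedenviquez}, the factor $\theta = \tfrac12$, and $\sup_{h\in\mathscr{H}}\norm{h'}_\infty$) and \emph{not} on the particular generator $L_k$; this is what allows a single constant to serve for the whole sequence. Applying Proposition~\ref{prop:1} with $L = L_k$, $G = G_k$, $\eta = \eta_k$, and $\xi = \xi_k = \eta_k - 2(1-b_2)$ then immediately yields the two displayed bounds, according to whether $\eta_k \leq 2(1-b_2)$ or $\eta_k > 2(1-b_2)$.

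For the sufficiency of conditions (i) and (ii) for convergence in distribution, I would argue as follows. Since $d_{\mathscr{H}}$ is assumed admissible for $\{G_k\} \cup \{Z\}$ and is defined via a separating class, convergence $d_{\mathscr{H}}(G_k, Z) \to 0$ implies convergence in distribution (one may always dominate, or compare with, the Fortet--Mourier metric, which metrizes weak convergence; alternatively one invokes directly that vanishing of $d_{\mathscr{H}}$ forces equality of limits of test-function expectations over a separating class). It therefore suffices to show $d_{\mathscr{H}}(G_k, Z) \to 0$ under (i) and (ii). Split $\N$ into the index set $A = \{k : \eta_k \leq 2(1-b_2)\}$ and its complement. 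Along $A$, the first bound of the theorem applies; since $1 - b_2 - \tfrac{\eta_k}{4} \leq 1 - b_2 - \tfrac{1}{4}\cdot 0$ is bounded (indeed $\eta_k > 1$ by definition of chaos grade, so the coefficient lies in a bounded range) and $\int_E U(G_k)\,d\mu \to 0$ by (i), we get $d_{\mathscr{H}}(G_k, Z) \to 0$ along $A$. Along any subsequence in the complement, I would use (ii): every such subsequence has a further subsequence $(\eta_{k_r})$ with $\eta_{k_r} \to 2(1-b_2)$, hence $\xi_{k_r} = \eta_{k_r} - 2(1-b_2) \to 0$, while $\int_E Q^2(G_{k_r})\,d\mu$ stays bounded by (ii) and $\int_E U(G_{k_r})\,d\mu \to 0$ by (i); the second bound then shows the right-hand side tends to $0$. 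A standard subsequence-of-subsequence argument upgrades this to convergence of the full sequence.

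The main technical point to be careful about — rather than a genuine obstacle — is the uniformity of the constant $c_{\mathscr{H}}$ across the different generators $L_k$. One must check that Lemma~\ref{lemmaedenviquez} is being invoked with the \emph{same} density $p$ (that of the fixed Pearson target $\nu$) for every $k$, so that the constant $k$ there, and hence $c_{\mathscr{H}}$, is genuinely independent of $k$; the generators $L_k$ vary, but the Stein machinery and the estimate on $\norm{f_h'}_\infty$ depend only on $\nu$. A second minor point is handling the two regimes of $\eta_k$ simultaneously when verifying convergence: one cannot assume the sequence eventually stays in one regime, which is why the argument is phrased via arbitrary subsequences and the characterization ``$d_{\mathscr{H}}(G_k,Z)\to 0$ iff every subsequence has a further subsequence along which it tends to $0$''. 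With these observations in place the proof is short, since all the analytic content is already contained in Theorems~\ref{thm:1} and~\ref{thm:4} and Proposition~\ref{prop:1}.
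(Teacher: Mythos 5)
Your proposal is correct and matches the paper's intent exactly: the paper states Theorem~\ref{thm:7} without a separate proof, as an immediate consequence of applying Proposition~\ref{prop:1} to each $(G_k, L_k, \eta_k)$, which is precisely what you do. Your added observations --- that the constant $c_{\mathscr{H}} = k\theta\sup_h\norm{h'}_\infty$ from Lemma~\ref{lemmaedenviquez} depends only on the fixed target density $p$ and not on the varying generators $L_k$, and that the two regimes of $\eta_k$ must be handled by a subsequence argument --- are exactly the details the paper leaves implicit, and they are handled correctly.
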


\begin{example}
  Let us give some explicit examples of the moment combinations appearing in
  Condition~\eqref{eq:44} for several target distributions. To improve readability, we abbreviate the $p$-th
  moment $\int_E^{} G_k^p \diff{\mu}$ by $m_p(G_k)$.
  \begin{enumerate}[(i)]
\item For convergence towards a centered Gaussian distribution with
  variance $\sigma^2$, we have $b(x)=\sigma^2$, so that by Table
  \ref{tableofcoeffs}, we get that $c_0 = \sigma^4$, $c_1 = 0$, $c_2 =
  -2\sigma^2$, $c_3=0$ and $c_4 =\frac{1}{3}$, hence recovering the well-known moment condition
  \begin{equation*}
    \frac{1}{3} m_4(G_k) - 2\sigma^2 m_{2}(G_k) +  \sigma^4 \to 0,
  \end{equation*}
  which becomes $m_4(G_k) \to 3$ when $m_2(G_k)=\sigma^2 = 1$.
  \item For a (heavy-tailed) Student $t$-distribution with mean zero and $\tau$ degrees of
freedom (which is a particular case of a Skew $t$-distribution with
parameters $m=\frac{\tau+1}{2}$, $\lambda = \nu = 0$ and $\alpha =
\sqrt{\tau}$), we have $b(x) = \frac{x^2}{\tau-1} +
\frac{\tau}{\tau-1}$. Therefore, the moment condition becomes
  \begin{equation*}
    \frac{(\tau-4)}{3(\tau-1)} m_4(G_k)
    - \frac{2\tau}{(\tau-1)} m_2(G_k) +  \frac{\tau^2}{\tau^2-3\tau+2} \to 0.
  \end{equation*}
  This moment condition is new.
  \item For the inverse gamma distribution
    with shape parameter $\alpha>0$ and scale parameter $\beta>0$, which is non-centered (as
    opposed to the two previous examples) with mean
    $\frac{\beta}{\alpha -1}$, we have $b(x)=
    \frac{x^2}{\alpha-1}$. We hence obtain new moment conditions as
    well, ensuring convergence to the (heavy-tailed) inverse gamma distribution. For instance, setting
    the shape parameter $\alpha = 5$, we get that
  \begin{equation*}
    \frac{1}{12} m_4(G_k)
    - \frac{\beta}{3} m_3(G_k) + \frac{\beta^2}{4} m_2(G_k)- \frac{\beta^3}{24} m_1(G_k) \to 0.
  \end{equation*}
\end{enumerate}
\end{example}

\section{Pearson chaos}
\label{secchaosstruct}

As an application of our results, we treat the case where the
converging sequence of chaotic eigenfunctions itself comes from a
generator associated to a Pearson law. To avoid technicalities, we
present here only the finite-dimensional case, analogous results in
infinite dimension can be obtained in a similar way. We begin by
describing a general and well-known tensorization procedure of Markov generators.

Fix $N \geq 2$ and, for $1 \leq i \leq N$, let $\mathcal{L}_i$ be a generator with
invariant probability measure $\mu_i$ and $L^2$-domain $\mathcal{D}(\mathcal{L}_i)
\subseteq L^{2}(E_i,\mathcal{F}_i,\mu_i)$.
Let $(E,\mathcal{F},\mu)$ be the product of the probability spaces
$(E_i,\mathcal{F}_i,\mu_i)$. Then we can define a generator $L_{N}= \otimes_{i=1}^N
\mathcal{L}_i$ on $\mathcal{D}(L_N)= \bigotimes_{i=1}^N
\mathcal{D}(\mathcal{L}_i)$ by
\begin{equation*}
  L_N 
  \left( F_1 \times F_2 \times \dots \times F_N \right)
  =
  \sum_{i=1}^N F_1 \times \dots \times F_{i-1} \times \left( \mathcal{L}_iF_i \right) \times F_{i+1} \times \dots \times F_N.
\end{equation*}
From this definition, it follows that if $F_i$ is an eigenfunction of
$\mathcal{L}_i$ with eigenvalue $\lambda_i$, then $F = \oplus_{i=1}^N F_i$ is an
eigenfunction of $L_N$ with eigenvalue $\lambda = \sum_{i=1}^N \lambda_i$. The following corollary describes how the chaos grade behaves under tensorization.

\begin{corollary}
  \label{cor:1}
  In the above setting, let each eigenfunction $F_i$ be chaotic with
  chaos grade $\eta_i$. Then $F$ is chaotic and its chaos grade $\eta$
  is bounded as follows:
  \begin{equation*}
    \min \left\{ \eta_1,\eta_2,\dots,\eta_N \right\}
    \leq
    \eta
    \leq
    \max \left\{ \eta_1, \eta_2,\dots,\eta_N \right\}.
  \end{equation*}
The above inequalities become equalities, if, and only if, all of the chaos
grades $\eta_i$ are equal. 
\end{corollary}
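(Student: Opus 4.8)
The plan is to establish that the tensorised eigenfunction $F = \oplus_{i=1}^{N} F_i$ is chaotic and to compute its chaos grade \emph{exactly}; it will turn out to be a convex combination of the $\eta_i$, and the two inequalities then follow immediately. Write $F_i$ for the eigenfunction of $\mathcal{L}_i$ associated with the eigenvalue $-\lambda_i$, and note that necessarily $\lambda_i > 0$ (a constant eigenfunction has no well-defined chaos grade). Thus $L_N F = -\lambda F$ with $\lambda = \sum_{i=1}^{N}\lambda_i > 0$. Since each $F_i$ is chaotic, $F_i^2$ lies in a closed sum of eigenspaces of $\mathcal{L}_i$, in particular $F_i^2 \in L^2(E_i,\mu_i)$; as $\mu$ is the product of the $\mu_i$, this gives $F^2 = F_1^2 \times \cdots \times F_N^2 \in L^2(E,\mu)$, so that the spectral decomposition of $F^2$ relative to $L_N$ is meaningful.

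First I would record the spectral structure of $L_N$. From the factorisation $L^2(E,\mu) = \bigotimes_{i=1}^{N} L^2(E_i,\mu_i)$ and the definition of $L_N$, the eigenspace of $L_N$ associated with an eigenvalue $-\kappa$ is spanned by the product functions $g_1 \times \cdots \times g_N$ with $g_i \in \ker(\mathcal{L}_i + \kappa_i\on{Id})$ and $\kappa_1 + \cdots + \kappa_N = \kappa$. Writing $\pi^{(i)}_{\kappa}$ for the orthogonal projection in $L^2(\mu_i)$ onto $\ker(\mathcal{L}_i + \kappa\on{Id})$, it follows that the orthogonal projection of $F^2$ onto $\ker(L_N + \kappa\on{Id})$ is
\[
  \sum_{\kappa_1 + \cdots + \kappa_N = \kappa} \pi^{(1)}_{\kappa_1}(F_1^2) \times \cdots \times \pi^{(N)}_{\kappa_N}(F_N^2).
\]

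The heart of the argument is to locate the top of the spectral support of $F^2$. For each $i$, let $M_i$ be the largest eigenvalue of $\mathcal{L}_i$ with $\pi^{(i)}_{M_i}(F_i^2) \neq 0$; unwinding the definition of the chaos grade, $M_i$ exists, $-M_i$ is an eigenvalue of $\mathcal{L}_i$, and the minimality of $\eta_i$ forces $M_i = \eta_i\lambda_i$ (in particular $M_i > \lambda_i$, since $\eta_i > 1$). For any tuple $(\kappa_1,\dots,\kappa_N)$ the product $\pi^{(1)}_{\kappa_1}(F_1^2) \times \cdots \times \pi^{(N)}_{\kappa_N}(F_N^2)$ is non-zero only if $\kappa_i \le M_i$ for every $i$, and it is then a non-zero function; hence, among tuples with $\kappa_1 + \cdots + \kappa_N = \sum_i M_i$ only $(M_1,\dots,M_N)$ contributes, so the projection of $F^2$ onto $\ker(L_N + (\sum_i M_i)\on{Id})$ equals the non-zero function $\pi^{(1)}_{M_1}(F_1^2) \times \cdots \times \pi^{(N)}_{M_N}(F_N^2)$, while all projections onto $\ker(L_N + \kappa\on{Id})$ with $\kappa > \sum_i M_i$ vanish. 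Therefore $-\sum_i M_i$ is an eigenvalue of $L_N$, $F$ is chaotic, and its chaos grade is
\[
  \eta = \frac{\sum_{i=1}^{N} M_i}{\lambda} = \frac{\sum_{i=1}^{N}\eta_i\lambda_i}{\sum_{i=1}^{N}\lambda_i},
\]
a convex combination of $\eta_1,\dots,\eta_N$ with strictly positive weights $\lambda_i/\lambda$.

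The conclusion is then immediate: a convex combination of $\eta_1,\dots,\eta_N$ always lies in $[\min_i\eta_i, \max_i\eta_i]$, and since every weight $\lambda_i/\lambda$ is strictly positive, equality on either side forces all the $\eta_i$ to coincide (and conversely, if they all coincide then $\eta$ equals their common value). The only point requiring real care is the spectral bookkeeping in the third step — verifying that the candidate top eigenvalue $\sum_i M_i$ is actually attained by a non-zero component of $F^2$ and that no cancellation destroys it — which is exactly where the factorisation $L^2(\mu) = \bigotimes_i L^2(\mu_i)$ and the maximality of each $M_i$ are used.
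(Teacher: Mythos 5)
Your proof is correct and follows essentially the same route as the paper: identify the top eigenvalue in the expansion of $F^2$ as $\sum_i \eta_i\lambda_i$, so that $\eta = \sum_i\eta_i\lambda_i/\sum_i\lambda_i$ is a convex combination of the $\eta_i$ with strictly positive weights, from which both the inequalities and the equality case follow. The only difference is that you spell out the spectral bookkeeping (the tensor factorisation of the eigenspaces and the non-vanishing of the top product component) that the paper's proof takes for granted.
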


\begin{proof}
  By definition, the squares $F_i^2$ can be expanded as sums of eigenfunctions,
  with the eigenvalue of largest magnitude in such an expansion being
   $\lambda_{i} \eta_i$. Therefore, $F^2$ can also be expanded as a sum of eigenfunctions, with the eigenvalue of largest magnitude,  say  $\lambda_{\text{max}}$, being given by
  \begin{equation*}
    \lambda_{\text{max}} = \sum_{i=1}^N \lambda_{i} \eta_i.
  \end{equation*}
  Applying the definition of chaos grade (see Definition \ref{def:1}) now yields that
  \begin{equation*}
    \eta = \frac{\lambda_{max}}{\lambda} = \frac{\sum_{i=1}^{N} \lambda_{i} \eta_i}{\sum_{i=1}^{N} \lambda_{i}},
  \end{equation*}
  from which the assertion follows as all $\lambda_i$ have the same sign.
\end{proof}

In the following proposition, we calculate the possible range of
values of the chaos grade for eigenfunctions related to all six Pearson distributions. 
\begin{proposition}
  \label{cor:4}
  Let $L$ be the generator associated to a Pearson diffusion defined
  by \eqref{eq:29} and denote the eigenvalues of $L$ by $-\lambda_n$
  where $\lambda_n=n(1-(n-1)b_2)\theta$ for $b_2 < \frac{1}{2n-1}$. Let $F_n$ be an eigenfunction of $L$ with respect to $-\lambda_n$. Then $F_n$ is chaotic, if, and only if, $b_2 < \frac{1}{4n-1}$, and in this case its chaos grade $\eta_n$ is given by
  \begin{equation}
    \label{eq:20}
  \eta_n = \eta_n(b_2) =
  \begin{cases}
    2 &\qquad \text{if $b_2=0$,} \\
    2 \left(
    1 + \frac{n}{n-1-\frac{1}{b_2}}
  \right)
  & \qquad \text{if $b_2 \neq 0$.} \\
\end{cases}
\end{equation}
Furthermore, the following is true.
\begin{enumerate}[(i)]
\item If $\mu$ is a Student, $F$- or inverse Gamma distribution, then $\eta_n \in \left( \frac{4}{3},2-2b_2 \right]$.
\item If $\mu$ is a Gaussian or Gamma distribution, then $\eta_n=2$. 
\item If $\mu$ is a Beta distribution then $\eta_n \in (4  ,2-2b_2]$, if $b_2<-1$, $\eta_n=4$, if $b_2=-1$ and $\eta_n \in [2-2b_2,4)$, if $-1<b_2<0$.
\end{enumerate}
\end{proposition}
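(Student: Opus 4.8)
The plan is to use the explicit description of the spectrum of a Pearson generator from~\eqref{eq:19} together with the product formula for the associated orthogonal polynomials. Recall that the eigenvalues are $-\lambda_n = -n(1-(n-1)b_2)\theta$ for those $n \in \N_0$ with $b_2 < \tfrac{1}{2n-1}$, and that the eigenfunction $F_n$ is (up to scaling) the degree-$n$ orthogonal polynomial $P_n$ associated to $\mu$. The key structural fact is that $P_n^2$ is a polynomial of degree $2n$, and hence, when $\mu$ has moments up to order $4n$ (equivalently, when degree-$2n$ polynomials are square-integrable eigenfunctions, i.e. $b_2 < \tfrac{1}{4n-1}$), it lies in the direct sum of eigenspaces $\bigoplus_{k=0}^{2n} \ker(L + \lambda_k \on{Id})$. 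Conversely, if $b_2 \geq \tfrac{1}{4n-1}$, then $P_n^2$ has degree $2n$ but is not square-integrable (or the relevant eigenspace does not exist), so $F_n$ cannot be chaotic. This will establish the ``if and only if'' part.

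For the value of the chaos grade, the point is to identify the largest $\eta > 1$ such that $-\eta\lambda_n$ appears among the eigenvalues used in the expansion of $P_n^2$, and by Definition~\ref{def:1} the chaos grade is the smallest such $\eta$ — but since the expansion of $P_n^2$ genuinely contains the degree-$2n$ component (the leading coefficient of $P_n^2$ is nonzero), the relevant eigenvalue is exactly $-\lambda_{2n}$. Thus
\begin{equation*}
  \eta_n = \frac{\lambda_{2n}}{\lambda_n} = \frac{2n(1-(2n-1)b_2)\theta}{n(1-(n-1)b_2)\theta} = \frac{2(1-(2n-1)b_2)}{1-(n-1)b_2}.
\end{equation*}
When $b_2 = 0$ this is $2$; when $b_2 \neq 0$, dividing numerator and denominator by $-b_2$ and simplifying gives
$\eta_n = 2\bigl(1 + \tfrac{n}{\,n-1-1/b_2\,}\bigr)$, which is~\eqref{eq:20}. (One should double-check that $P_n^2$ really does have nonzero projection onto $\ker(L+\lambda_{2n}\on{Id})$ and onto $\ker(L+\lambda_0\on{Id})=\on{span}\{1\}$, both of which are immediate: the first from the leading coefficient, the second since $\int P_n^2\,d\mu = \|P_n\|_2^2 > 0$; in fact $P_n^2$ is a nonnegative non-constant function so it has a genuine degree-$2n$ term.)

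For the three itemized cases, I would simply analyze the function $b_2 \mapsto \eta_n(b_2)$ on the range of $b_2$ permitted for each family, using the chaoticity constraint $b_2 < \tfrac{1}{4n-1}$. For the Student, $F$- and inverse Gamma distributions one has $b_2 > 0$, so $b_2 \in \bigl(0, \tfrac{1}{4n-1}\bigr)$; checking monotonicity of $\eta_n$ in $b_2$ on this interval and evaluating at the endpoints gives $\eta_n \in \bigl(\tfrac{4}{3}, 2-2b_2\bigr]$ — note $2-2b_2 = \eta_1$ corresponds to the $n=1$ boundary behavior and the value $\tfrac{4}{3}$ is the limit as $b_2 \uparrow \tfrac{1}{4n-1}$ when... actually one must be careful: the stated bounds are uniform over all admissible $n$, so the argument is to show $\eta_n(b_2) \to \tfrac{4}{3}$ as the parameter degenerates and $\eta_n \leq \eta_1 = 2-2b_2$ — here I would re-read the intended meaning of ``$b_2$'' in the bound $2-2b_2$, which I believe refers to the fixed $b_2$ of the generator with $\eta_1 = 2(1-b_2)$ being the largest grade, attained at $n=1$. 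The Gaussian and Gamma cases have $b_2 = 0$, giving $\eta_n = 2$ directly. The Beta case has $b_2 < 0$, and the formula $\eta_n = 2(1 + \tfrac{n}{n-1-1/b_2})$ with $-1/b_2 > 0$ must be examined: here $\eta_n > 2$ always, and the sign of $b_2 + 1$ controls whether $\eta_n$ exceeds or falls below $4$, with equality exactly at $b_2 = -1$ (where $\eta_n = 2(1 + \tfrac{n}{n}) = 4$ for all $n$); a short monotonicity check in $n$ then yields the three sub-cases. The main obstacle is not any single computation but rather pinning down precisely what is being optimized (over $n$, over $b_2$, or both) in each of the three itemized bounds and verifying the endpoint values and monotonicity directions; once the optimization variable is fixed, each case reduces to elementary calculus on a rational function of one variable.
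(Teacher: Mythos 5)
Your proposal is correct and follows essentially the same route as the paper: identify $F_n$ with the degree-$n$ orthogonal polynomial, observe that $F_n^2$ has degree $2n$ so chaoticity is equivalent to square-integrability of eigenfunctions up to degree $2n$ (i.e. $b_2<\tfrac{1}{4n-1}$ via \eqref{eq:19}), and compute $\eta_n=\lambda_{2n}/\lambda_n$, with the itemized ranges following from elementary monotonicity of $\eta_n(b_2)$ in $n$ for fixed $b_2$ (upper bound $2-2b_2$ at $n=1$, lower bound $\tfrac{4}{3}$ from the largest admissible $n$). Your explicit check that $P_n^2$ has nonzero projection onto the top eigenspace is a point the paper leaves implicit, and your eventual resolution of what is optimized (over $n$, for the fixed $b_2$ of the generator) is the correct reading.
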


\begin{proof}
  An eigenfunction $F_n$ of a Pearson generator with respect to the
  eigenvalue $-\lambda_{n} = n (1- (n-1)b_2)\theta$ is an orthogonal polynomial of degree
  $n$. Its square is then a polynomial of degree $2n$. In order for
  $F_n^2$ to be expressable as a sum of square integrable
  eigenfunctions, we therefore need that the first $2n$ eigenfunctions
  of $L$ are square integrable, or equivalently that moments up to
  order $4n$ exist. Hence, by \eqref{eq:19}, the condition required is
  \begin{equation}
    \label{eq:4}
    b_2 < \frac{1}{4n-1}.
  \end{equation}
  Let us assume that the above inequality is
  satisfied. Then, by its very definition, $\eta_n$ is given by the
  quotient of the $2n$-th eigenvalue with the $n$-th one. Indeed, as $\eta_n$
  is the multiplicative factor that indicates what eigenvalue the
  highest-order eigenfunction in the decomposition of the square of $F_n$ is
  associated to. On the other hand, we know that the square of the
  polynomial eigenfunction of degree $n$ produces a sum of polynomial
  eigenfunctions up to degree $2n$, corresponding to the eigenvalue
  $-\lambda_{2n}$. Hence we have 
  \begin{equation}
    \label{eq:3}
  \eta_n = \frac{\lambda_{2n}}{\lambda_n}=\frac{2n(1-(2n-1)b_2)\theta}{n(1-(n-1)b_2)\theta},
\end{equation}
so that~\eqref{eq:20} follows. Assertion (ii) is immediate as in this
case $b_2=0$ and the chaos grade is constant. In order
to show assertion (i) in which $b_2>0$, 
note that the function $n \mapsto \eta_n(b_2)$ is
decreasing. Therefore, the largest possible chaos grade is obtained by
taking $n=1$ in \eqref{eq:3}, which gives $2(1-b_2)$. On the other
hand, as by \eqref{eq:4}, $n < \frac{1}{4}\left( \frac{1}{b_2}+1
\right)$, the lower bound $\frac{4}{3}$ of the chaos grade is obtained by taking 
$n=\left\lfloor \frac{1}{4}\left( \frac{1}{b_2}+1 \right)
\right\rfloor$. Assertion (iii) where $b_2<0$ follows in a similar way.

\end{proof}
Proposition~\ref{cor:4} shows that on a global level, the chaos grade $\eta$ of
chaotic eigenfunctions of a Pearson generator lies in the interval $\left(
  \frac{4}{3},\infty  \right)$. Furthermore, all values in this interval can be
attained, in the sense that if $x$ is such a value, then there exists a
generator $L$ of a Pearson diffusion \eqref{eq:29} which has a chaotic eigenfunction of chaos grade $x$. The six types of Pearson distributions are partitioned into three classes with disjoint intervals for the chaos grade values of the corresponding eigenfunctions. These intervals are all of the form
\begin{equation*}
  \left\{ 2(1-b_2) \colon b_2 \in I \right\},
\end{equation*}
where $I$ is the set of allowed values for the corresponding class, i.e., $I=(-\infty,0)$ for the class of student, $F$- and inverse Gamma distributions, $I=\left\{ 0 \right\}$ for Gaussian and Gamma distributions and $I=(0,\infty)$ for the Beta distributions.

\begin{figure}[h]
  \centering
\begin{tikzpicture}[every edge/.style={shorten <=1pt, shorten >=1pt}]
  \draw[->] (3.75,0) -- (4,0)  node [below, above=-24pt] {$\frac{4}{3}$} --
(6,0) node
  [below, above=-20pt] {2} -- (12,0) node [below, above=-20pt] {4} --
(12.75,0);
  \draw [<->] (4,0.2) -- (5.9,0.2) node [midway, above=2pt,
  anchor=south] {$\substack{\rm
      Student\\ \rm F-dist.\\ \rm Inv.\ Gamma}$} ;
  \draw [<-] (6.1,0.2) -- (12.2,0.2) node [midway, above=2pt,
  anchor=south] {$\substack{\rm Beta}$} ;
  \draw (4,0.07) -- (4,-0.07);
  \draw (6,0.07) -- (6,-0.07);
  \draw (12,0.07) -- (12,-0.07);
  \draw [->] (6,1.3) node [above] {$\substack{\rm
      Gaussian\\ \rm Gamma}$} -- (6,0.2);
  \draw [dotted] (12.2,0.2) -- (12.75,0.2);
\end{tikzpicture}
\caption{Possible chaos grades for the Pearson distributions}
\label{chaosgradepicture}
\end{figure}
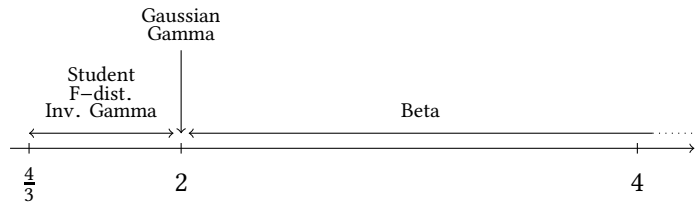

Applying the tensorization procedure described above to the case 
where all generators $\mathcal{L}_i$ are equal to some generator $L$ of a
Pearson diffusion immediately yields the following result.

\begin{theorem}
  Let $\mu$ be a Pearson distribution and
  $L$ be  the associated Markov generator. Denote its
  eigenvalues by $\left\{ -\lambda_i
    \colon 0 \leq i < I  \right\}$, where $I \in \N \cup \left\{ \infty \right\}$ and such
  that $\lambda_i < \lambda_{i+1}$. Furthermore, denote by $P_i$ the $i$-th orthogonal polynomial associated to $\mu$.
  Let $L_N = L^{\otimes N}$ be the generator obtained by the tensorization procedure described above and denote by $\mu_N$ the
  associated product measure. Then the set of
  eigenvalues of $L_N$ is given by
    \begin{equation*}
      S = \left\{ - \sum_{i=1}^N \lambda_{k_i} \colon k_1,\dots,k_N \in I \right\}.
    \end{equation*}
    If $-\lambda= - \sum_{i=1}^N \lambda_{k_i}$ is such an eigenvalue, then all eigenfunctions $F$
    of $L_N$ with respect to $-\lambda$ are of the form
    \begin{equation*}
      F = \sum_{\abs{\alpha}=p}^{} a_{\alpha} P_{\alpha},
    \end{equation*}
    where
    \begin{enumerate}[(i)]
    \item $p=\sum_{i=1}^N k_i$,
    \item the sum is taken over all $N$-dimensional multi-indices $\alpha=(\alpha_1,\dots,\alpha_N)$ of order $p$,
    \item the $a_{\alpha}$ are real constants,
    \item $P_{\alpha}(x)=P_{\alpha}(x_1,x_2,\dots,x_N) = \prod_{i=1}^{N} P_{\alpha_i}(x_i)$.
    \end{enumerate}
\end{theorem}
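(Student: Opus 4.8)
The plan is to reduce everything to a diagonalisation statement over the Hilbert tensor product $L^2(E,\mu_N)\cong\bigotimes_{i=1}^N L^2(E_i,\mu_i)$. The two ingredients I would assemble are: (a) the product functions $P_\alpha(x)=\prod_{i=1}^N P_{\alpha_i}(x_i)$ are eigenfunctions of $L_N$; and (b) the family $\{P_\alpha\}$ spans the relevant part of $L^2(E,\mu_N)$. Granted these, the description of the spectrum $S$ and of each eigenspace is bookkeeping.

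For (a) I would use the observation recorded just before the statement, specialised to $\mathcal{L}_i=L$ for all $i$: since $LP_{\alpha_i}=-\lambda_{\alpha_i}P_{\alpha_i}$ for the degree-$\alpha_i$ orthogonal polynomial, the defining Leibniz-type formula for $L_N$ gives immediately $L_N P_\alpha=-\bigl(\sum_{i=1}^N\lambda_{\alpha_i}\bigr)P_\alpha$. In particular every element of $S$ is an eigenvalue of $L_N$, and for $-\lambda=-\sum_i\lambda_{k_i}$ the span of the $P_\alpha$ with $\sum_i\lambda_{\alpha_i}=\lambda$ is contained in the $(-\lambda)$-eigenspace; the reverse inclusions are the content of (b).

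For (b), one invokes spectral theory. For the three light-tailed Pearson laws (Gaussian, Gamma, Beta) the associated orthogonal polynomials $\{P_j:0\le j<I\}$ form a complete orthonormal basis of $L^2(E_i,\mu_i)$ --- the classical statement that polynomials are dense in $L^2$, equivalently that the moment problem is determinate; for the three heavy-tailed laws they span only the discrete spectral subspace of $L$, the remainder of the spectrum being continuous. In either case $\{P_\alpha\}$ is an orthogonal system diagonalising $L_N$ on the corresponding tensor product of subspaces, and since the point spectrum of the (closure of the) sum of commuting self-adjoint operators acting on distinct tensor factors is exactly the set of sums of their eigenvalues, with eigenspaces the closed linear spans of the tensor products of the factor eigenspaces --- a Fubini-type argument on the joint spectral measure --- the eigenvalues of $L_N$ are precisely the elements of $S$ and the $(-\lambda)$-eigenspace, for $-\lambda=-\sum_i\lambda_{k_i}$, is the span of those $P_\alpha$ with $\sum_{i=1}^N\lambda_{\alpha_i}=\lambda$. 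Unwinding the explicit form \eqref{eq:19} of the $\lambda_n$ identifies this index set with the multi-indices $\alpha$ of order $p=\sum_{i=1}^N k_i$, which is the asserted form of $F$.

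I expect the spectral input of (b) to be the only real obstacle: establishing $L^2$-completeness of the orthogonal polynomials in the light-tailed cases, and, in the heavy-tailed cases, ruling out that a genuine $L^2$-eigenfunction of $L_N$ carries any mass from the continuous spectrum of a factor (i.e. that $\sigma_p(A\otimes I+I\otimes B)=\sigma_p(A)+\sigma_p(B)$). Identifying $\bigotimes_i L^2(E_i,\mu_i)$ together with its domain with $L^2(E,\mu_N)$, and passing from the spectral condition $\sum_i\lambda_{\alpha_i}=\lambda$ to the degree condition $\abs{\alpha}=p$, are routine.
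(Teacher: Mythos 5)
The paper offers no proof of this theorem at all: it is stated as an ``immediate'' consequence of the tensorization procedure, so your parts (a) and (b) supply an argument where the paper has none. Part (a) is exactly the observation recorded just before the statement, and the spectral input of part (b) is sound: polynomials are dense in $L^2(\mu)$ for the Gaussian, Gamma and Beta laws, and the Fubini argument on the joint spectral measure does show that the point spectrum of a sum of commuting self-adjoint operators acting on distinct tensor factors is the set of sums of the factor eigenvalues, with eigenspaces spanned by products of factor eigenfunctions. Up to that point your argument correctly establishes that the $(-\lambda)$-eigenspace of $L_N$ is the linear span of those $P_\alpha$ with $\sum_{i=1}^N\lambda_{\alpha_i}=\lambda$.

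The gap is the very last step, which you dismiss as routine: passing from the spectral condition $\sum_{i}\lambda_{\alpha_i}=\lambda$ to the degree condition $\abs{\alpha}=p$. By \eqref{eq:19}, $\lambda_n=n(1-(n-1)b_2)\theta$ is quadratic in $n$ whenever $b_2\neq 0$, so $\sum_i\lambda_{\alpha_i}=\theta\bigl((1+b_2)\abs{\alpha}-b_2\sum_i\alpha_i^2\bigr)$ does not determine $\abs{\alpha}$. Concretely, for the Beta law with $\alpha+\beta=2$ (Legendre case, $b_2=-1/2$) one has $\lambda_n=n(n+1)\theta/2$, hence $\lambda_3+\lambda_0=6\theta=\lambda_2+\lambda_2$: already for $N=2$ the eigenfunctions $P_3\otimes P_0$ and $P_2\otimes P_2$ of $L_2$ share the eigenvalue $-6\theta$ while having orders $3$ and $4$. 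So the identification of $\left\{\alpha\colon\sum_i\lambda_{\alpha_i}=\lambda\right\}$ with $\left\{\alpha\colon\abs{\alpha}=p\right\}$ fails; this reflects an imprecision in the statement itself, which is literally correct only when $b_2=0$ (Gaussian and Gamma), and in general the eigenspace must be described by the spectral condition --- which is precisely what the rest of your argument proves. A smaller point: you also implicitly use that each discrete eigenvalue of the one-dimensional Pearson generator has a one-dimensional eigenspace spanned by $P_k$ alone; this is true (it is a one-dimensional Sturm--Liouville fact, and the paper asserts multiplicity one in the light-tailed cases) but should be stated.
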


Combining Corollary~\ref{cor:1} with Proposition~\ref{cor:4} and the discussion
thereafter, we see that for the six classes of Pearson distributions the
intervals for the chaos grades of the respective chaotic eigenfunctions are invariant under tensorization. In other words, the chaos grades of chaotic eigenfunctions of $L_N$
\begin{enumerate}[(i)]
\item assume values in the interval $\left( \frac{4}{3},2 \right)$, if the
  tensorized distribution is Student, $F$- or inverse Gamma,
\item are equal to two in the case of tensorized Gaussian or Gamma
  distributions,
\item lie in the interval $\left( 2,\infty \right)$ if the distribution is Beta.
\end{enumerate}
Coming back to the Four Moments Theorems proved in Section~\ref{s-1}, the
possible chaos grades also yield a heuristic about ``compatible'' Pearson
distributions, in the sense that one can be obtained as a limit of a
chaos of another Pearson distribution. Recall from Section~\ref{s-1} (in particular Remark~\ref{s-6}-(iii)) that if we
want to approximate a random variable $Z$ with a Pearson law and chaos
grade $\tilde{\eta}$ to be the limit of a sequence $(G_n)$ of chaotic random variables with corresponding chaos grade sequence
$(\eta_n)$, we need that $\eta_n \leq \widetilde{\eta}$ or $\eta_n \to \widetilde{\eta}$, where
$\widetilde{\eta}$ is the chaos grade of $Z$ when seen as a chaotic random variable
itself. For example, if $Z$ has a Gaussian or Gamma distribution, then
$\widetilde{\eta}=2$. Therefore, chaotic random variables coming from
a heavy tailed Pearson chaos are compatible, as in this case we always
have $\eta_n \leq
2$. The Gamma and Gaussian chaos is of course compatible as well as here the two
chaos grades coincide and for convergence
from Beta chaos to a Gaussian or Gamma distribution, our conditions require that
$\eta_n \to 2$. This translates to the parameters of the underlying invariant Beta
measure growing to infinity.
Taking $Z$ to be a heavy tailed Pearson distribution yields a chaos grade
$\widetilde{\eta}$ which is strictly less than two. Here, our
conditions suggest that only heavy-tailed chaos are compatible.
The aforementioned heuristic could likely be made rigorous by a detailed study
of the carré du champ characterization given in Theorem~\ref{thm:5} and is
left for future research.

\bibliography{/home/solesne/Dropbox/work/research/biblio} 
\end{document}